\newcommand{\QED}{$\Box$}
\newcommand{\Q}{\mathbb{Q}}
\newcommand{\R}{\mathbb{R}}
\newcommand{\N}{\mathbb{N}}
\newcommand{\dom}{\mbox{dom}}
\newcommand{\ran}{\mbox{ran}}
\newcommand{\diam}{\mbox{diam}}
\newtheorem{theorem}{Theorem}[section]
\newtheorem{lemma}[theorem]{Lemma}
\theoremstyle{definition}
\newtheorem{definition}[theorem]{Definition}
\theoremstyle{theorem}
\newtheorem{corollary}[theorem]{Corollary}
\theoremstyle{theorem}
\newtheorem{proposition}[theorem]{Proposition}
\theoremstyle{theorem}
\theoremstyle{theorem}
\theoremstyle{definition}
\theoremstyle{theorem}
\numberwithin{equation}{section}
\begin{document}

\title{Effective versions of local connectivity properties
}

\author{Dale Daniel}
\address{Department of Mathematics\\
Lamar University\\
Beaumont, Texas 77710}
\email{dale.daniel@lamar.edu}

\author{Timothy H. McNicholl}
\address{Department of Mathematics\\
Lamar University\\
Beaumont, Texas 77710}
\email{timothy.h.mcnicholl@gmail.com}
\begin{abstract}
We investigate, and prove equivalent, effective versions of local connectivity 
and uniformly local arcwise connectivity for connected and computably compact subspaces of Euclidean space.  We also prove that Euclidean continua that are computably compact and effectively locally connected are computably arcwise connected.
\end{abstract}

\keywords{Computable topology \and effective local connectivity \and Peano continua}
\subjclass[2000]{03F60, 30D55, 54D05, 54F15 }

\maketitle

\section{Introduction}\label{sec:INTRO}

Computability theory is concerned with the theoretical and practical limitations of discrete computing devices as well as the effective content of mathematical theorems.  That is, when is the solution operator for a given class of problems amenable to computation by a discrete computing device or susceptible to an explicitly constructive description?  Such a theory requires precise mathematical foundations in order to achieve rigorous demonstration of its results.  For computation with discrete data, such as the natural or rational numbers, the foundations laid by the work of Turing, Church, and Kleene suffice.  The interested reader 
may find a historical survey of the genesis of these ideas in \cite{Feferman.2006}
 and detailed developments in standard texts such as \cite{Davis.1958}.  These notions are also sufficient for the exploration of the effective content of theorems in algebra.  See, \emph{e.g.} \cite{Ershov.Goncharov.Nerode.Remmel.1998.2}. 

When one wants to consider the theorems of analysis and topology, it is essential however to have a sound theory of computation with continuous data.  Such a theory should extend without overriding the framework for discrete data.  In addition, in it the fundamental mathematical notion of approximation should bridge the divide between the continuous and the discrete.  Several such theories are available.  For example, see \cite{Bishop.Bridges.1985}, \cite{Kalantari.Welch.1998}, \cite{Kalantari.Welch.1999}, \cite{Pour-El.Richards.1989}.  
We will base our work here on the Type Two Effectivity approach to computable analysis
as developed in \cite{Weihrauch.2000}.  However, many of our results could be translated into the framework of other approaches.

In order to investigate the effective content of a theory it is first necessary to formulate 
effective versions of its basic definitions.  Roughly speaking, an effective version of a property insists that we can actually compute from any entity for which the property holds all objects whose existence is thereby entailed.
Fundamental to topology and much of analysis are the notions of compact, closed, open, and connected set.  Effective versions of compact, closed, and open sets within the framework of Type Two Effectivity were explored in detail by V. Brattka and K. Weihrauch in \cite{Brattka.Weihrauch.1999}.  Local connectivity perhaps sits on a lower tier than these first four topological concepts, but nevertheless has played an important role in the development of topology and analysis.
A space is locally connected if each of its points has a local basis of connected open sets.
This property plays a crucial role in the characterization of space-filling curves; \emph{i.e.}, the Hahn-Mazurkiewicz Theorem \cite{Hahn.1914}, \cite{Mazurkiewicz.1920}. 
The notion of effective local connectivity first appeared in J. Miller's paper on 
effective embeddings of balls and spheres \cite{Miller.2002.4springer}.  More recently, it has been used
by V. Brattka in connection with computation of functions from their graphs \cite{Brattka.2008}.   

Here, we consider two effective versions of a cousin of local connectivity: uniform local arcwise connectivity.  Roughly speaking, a space is uniformly locally arcwise connected if all points sufficiently close in the space can be joined by an arc of arbitrarily small diameter.  Here, an \emph{arc} is a compact, connected set for which there are exactly two points with the property that the removal of either one of them from the set results in another connected set.  
There are at least two ways to create an effective version of this notion.  On the one hand, we may want to compute how close two points need to be in order to join them by
an arc of diameter below some given value.  On the other hand, we may also want to 
compute such an arc.  These simple observations lead to the notions of effective uniform local arcwise connectivity and strongly effective uniform local arcwise connectivity.
These are defined precisely in Section \ref{sec:TTE}.

Our main result is that on subsets of Euclidean space that are connected and computably compact, the notions of effective local connectivity, effective uniform local arcwise connectivity, 
and strongly effective uniform local arcwise connectivity are equivalent.  Roughly speaking, a subset of $\R^2$ is computably compact if it can be plotted with arbitrary precision by a 
discrete computing device.  A precise definition which also covers spaces of dimension greater than two is given in Section \ref{sec:TTE}.  See also \cite{Brattka.2008}.

One interpretation of this result is that effective local connectivity provides, for spaces 
which are computably compact, the precise amount of information necessary for the 
computation of arcs between points in the space.  
Another interpretation is that it provides an effective version of a classical result: every Peano continuum is uniformly arcwise connected (see, \emph{e.g.} \cite{Hocking.Young.1961}).  
By a Peano continuum is meant a space which is compact, metrizable, connected, and locally connected.

\section{Summary of pertinent notions and results from topology}

The material in this section is taken from Hocking and Young \cite{Hocking.Young.1961}.  

Let $d$ be the Euclidean metric on $\R^n$.  If $X \subseteq \R^n$ is bounded, then we let 
\[
\diam(X) = \sup\{d(x,y)\ |\ x,y \in X\}.
\]
If $X, Y \subseteq \R^n$ are closed, then we let
\[
d(X,Y) = \min\{d(x,y)\ |\ x \in X\ \wedge\ y \in Y\}.
\]
Let $B_\epsilon(p)$ denote the open ball of center $p$ and radius $\epsilon$. 
When $S \subseteq \R^n$, we let 
\[
B_\epsilon(S) = \bigcup_{p \in S} B_\epsilon(p).
\]

\begin{proposition}\label{prop:COMPACT}
If $C$ is a compact set that is contained in an open set $U \subseteq \R^n$, 
then $B_\epsilon(C) \subseteq U$ for some $\epsilon > 0$.
\end{proposition}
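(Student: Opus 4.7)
The plan is to give a short proof by contradiction that uses only the sequential compactness of $C$ and the fact that $U$ is open. Suppose no such $\epsilon$ exists. Then for every $k \in \N$, taking the candidate value $1/k$, we can find a point $p_k \in C$ and a point $q_k \in \R^n \setminus U$ with $d(p_k,q_k) < 1/k$. Since $C$ is compact, the sequence $(p_k)$ has a convergent subsequence $p_{k_j} \to p$ with $p \in C$. The triangle inequality then forces $q_{k_j} \to p$ as well. Because $U$ is open, $\R^n \setminus U$ is closed, so $p \in \R^n \setminus U$. But $p \in C \subseteq U$, a contradiction. Hence some $\epsilon > 0$ works, and by definition $B_\epsilon(C) = \bigcup_{p \in C} B_\epsilon(p) \subseteq U$.

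An alternative approach, which I would mention only if a quantitative $\epsilon$ is desired, is to consider the function $f : C \to \R$ defined by $f(x) = d(x, \R^n \setminus U)$. This function is $1$-Lipschitz, hence continuous, and it is strictly positive on $C$ because $U$ is open and $C \subseteq U$. Compactness of $C$ implies $f$ attains its infimum $\delta > 0$, and then any $\epsilon \in (0,\delta)$ has the desired property.

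I do not expect a real obstacle here; the only thing to be careful about is not to use compactness of $\R^n \setminus U$ (which need not hold) and to invoke only that it is closed, so that the limit of $(q_{k_j})$ stays in it. Either of the two arguments above is short enough to present essentially as written.
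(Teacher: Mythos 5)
Your first argument is correct and is essentially the paper's own proof: negate the conclusion, extract a convergent subsequence of the points of $C$, and use that $\R^n \setminus U$ is closed to reach a contradiction (the paper merely swaps the names of the two sequences). The alternative via the $1$-Lipschitz function $x \mapsto d(x, \R^n \setminus U)$ is also fine, but the main argument already matches the paper.
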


\begin{proof}
Suppose otherwise.  Then, for each positive integer $n$, there is a point 
$p_n \in B_{1/n}(C) - U$.  For each $n$, there is a point $q_n \in C$ such that 
$d(p_n, q_n) < 1/n$.  Let $q_{n_1}, q_{n_2}, \ldots$ be a convergent subsequence
of $q_1, q_2, \ldots$, and let $q \in C$ be its limit.  Then, 
$\lim_{k \rightarrow \infty} d(p_{n_k}, q_{n_k}) = 0$.  It follows that 
$\lim_{k \rightarrow \infty} p_{n_k} = q$.  Hence, $q \in \R^n - U$ since $U$ is open.  This is a contradiction.
\QED\end{proof}

An \emph{arc} is a 
homeomorphic image of $[0,1]$.  
We will call such a homeomorphism a \emph{parametrization} of the arc.  
If $A$ is an arc, then there are exactly two points in $A$ such that the removal of
either one of these points from $A$ yields a connected set.  
We call these points the \emph{endpoints of $A$}.  It follows that if 
$f$ is a parametrization of an arc $A$, then $f(0)$ and $f(1)$ are the
endpoints of $A$.  If $x,y$ are the endpoints of an arc $A$, then we say 
that $A$ is \emph{an arc from $x$ to $y$}.  

A topological space $X$ is \emph{arcwise connected} if for every distinct $x,y \in X$ 
there is an arc in $X$ from $x$ to $y$.  

Suppose $X$ is a topological space.  A subset $C$ of $X$ is \emph{connected} if there do not exist open sets $U,V$ such that $U \cap X$ and $V \cap X$ are non-empty and such that $U \cap V \cap X = \emptyset$.  A \emph{connected component} of $X$ is a connected subset of $X$ that is not a proper subset of any connected subset of $X$.  \emph{i.e.} a maximal connected subset of $X$.  

We now discuss local connectivity properties.

\begin{definition}\label{def:LOC.CONNECTED}
A topological space $X$ is \emph{locally connected (LC)} if 
for every $p \in X$ and every neighborhood of $p$, $U$, 
there is a connected neighborhood of $p$, $V$, such that 
$V \subseteq U$.  
\end{definition}

The following is Theorem 3.2 of \cite{Hocking.Young.1961}.

\begin{theorem}\label{thm:LC}
Let $X$ be a topological space.  Then, $X$ is locally connected if and only if for every open $U \subseteq X$, 
	each connected component of $U$ is open in $X$.
\end{theorem}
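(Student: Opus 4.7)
The plan is to prove both directions directly from the definitions, using the maximality property of connected components as the main tool.

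For the forward direction, assume $X$ is locally connected. Fix an open set $U \subseteq X$ and let $C$ be a connected component of $U$. I would show $C$ is open by showing every $p \in C$ is an interior point. Since $U$ is an open neighborhood of $p$, local connectivity supplies a connected neighborhood $V$ of $p$ with $V \subseteq U$. Because $V$ is connected, contains $p$, and lies in $U$, while $C$ is the maximal connected subset of $U$ containing $p$, I would conclude $V \subseteq C$. Hence a neighborhood of $p$ lies in $C$, so $p$ is interior to $C$, and $C$ is open.

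For the reverse direction, assume every connected component of every open subset of $X$ is open in $X$. Let $p \in X$ and let $U$ be a neighborhood of $p$. Passing to an open neighborhood of $p$ contained in $U$ if necessary, I may assume $U$ is open. Let $C$ be the connected component of $U$ that contains $p$. By hypothesis $C$ is open, and $C$ is a connected subset of $U$ containing $p$, so $C$ is a connected neighborhood of $p$ with $C \subseteq U$. This verifies local connectivity at $p$, and since $p$ was arbitrary, $X$ is locally connected.

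The argument is almost entirely formal once one uses the correct tool (maximality of components) in each direction, so there is no real obstacle beyond being careful about what "neighborhood" means in Definition \ref{def:LOC.CONNECTED}; both the open-set reading and the more general reading (any set whose interior contains the point) go through with the same argument, since the forward step only needs an \emph{open} neighborhood inside $U$ and the backward step directly produces an open set. No appeal to Proposition \ref{prop:COMPACT} or to any metric structure on $X$ is required.
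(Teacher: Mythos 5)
Your proof is correct and is the standard argument; note that the paper itself supplies no proof of this statement, citing it as Theorem 3.2 of Hocking and Young, so there is nothing internal to compare against. The one step worth spelling out is the maximality appeal in the forward direction: with the paper's definition of a component as a \emph{maximal} connected subset, to get $V \subseteq C$ you should observe that $V \cup C$ is connected (two connected sets sharing the point $p$) and lies in $U$, so maximality gives $V \cup C = C$; with that remark included, both directions are complete, and your observation that either reading of ``neighborhood'' works is accurate.
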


\begin{definition}\label{def:ULAC}
A metric space $(X,d)$ is \it uniformly locally arcwise connected (ULAC)\rm\ if for every $\epsilon > 0$ there exists $\delta > 0$ such that 
for all distinct $x, y \in X$ with $d(x,y) < \delta$, there exists an 
arc $A$ in $X$ from $x$ to $y$ whose diameter is less than 
$\epsilon$.
\end{definition}

A \emph{continuum} is a compact, connected, and metrizable topological space.
A \emph{Peano continuum} is a locally connected continuum.  By the Hahn-Mazurkiewicz Theorem, these are precisely the images of $[0,1]$ under 
continuous maps \cite{Hahn.1914}, \cite{Mazurkiewicz.1920}.   
The most pertinent results about Peano continua are the following.
Proofs can be found in Chapter 3 of \cite{Hocking.Young.1961}. 

\begin{theorem}\label{thm:PCARC}
Every connected open subset of a Peano continuum is arcwise connected.
\end{theorem}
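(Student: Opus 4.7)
The plan is to prove this via an equivalence-class argument that piggybacks on the classical fact, cited in the introduction, that every Peano continuum is uniformly locally arcwise connected. I would define a relation $\sim$ on $U$ by $a \sim b$ iff $a = b$ or there is an arc in $U$ from $a$ to $b$. Reflexivity is built in, symmetry follows by reversing a parametrization, and transitivity uses the standard topological fact that if $A_1$ is an arc in $U$ from $a$ to $b$ and $A_2$ is an arc in $U$ from $b$ to $c$, then $A_1 \cup A_2$ contains an arc from $a$ to $c$ (follow $A_1$ from $a$ until one first meets $A_2$, then traverse the appropriate subarc of $A_2$). Granted this, $\sim$ is an equivalence relation on $U$, and if each class is open in $U$ then connectedness of $U$ forces a single class, namely $U$ itself, proving $U$ is arcwise connected.

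The heart of the argument is to verify that each $\sim$-class is open. Fix $y \in U$. First I would apply Proposition \ref{prop:COMPACT} to $C = \{y\}$ and the open set $U$ to obtain $\epsilon_0 > 0$ with $B_{\epsilon_0}(y) \subseteq U$; setting $\epsilon = \epsilon_0 / 2$ then gives $\overline{B_\epsilon(y)} \subseteq U$. Next I would invoke uniform local arcwise connectivity of the ambient Peano continuum $X$ to obtain $\delta > 0$ such that any two distinct points of $X$ within distance $\delta$ are joined by an arc in $X$ of diameter less than $\epsilon$. For any $z \in B_\delta(y) \setminus \{y\}$, fix such an arc $A$ from $y$ to $z$; since $y \in A$ and $\diam(A) < \epsilon$, every point of $A$ lies within $\epsilon$ of $y$, and hence $A \subseteq \overline{B_\epsilon(y)} \subseteq U$. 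Thus $z \sim y$, so $B_\delta(y)$ lies in the $\sim$-class of $y$, which is therefore open.

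The main obstacle is concealed in the citation: establishing uniform local arcwise connectivity of a Peano continuum is the nontrivial part, and it is typically proved by an inductive chain-refinement construction that exhibits an arc as the Hausdorff limit of a sequence of chains of connected open sets whose mesh shrinks to zero. Since the paper's introduction appeals directly to this classical fact via Hocking and Young, the route above is the natural one; the only self-contained topological work needed is the transitivity argument for $\sim$, which is a routine application of the two-endpoint characterization of arcs.
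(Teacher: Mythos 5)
Your reduction has the logical arrow pointing the wrong way. The paper does not prove Theorem \ref{thm:PCARC} at all: it quotes it from Chapter 3 of Hocking and Young, where the proof is the nested simple-chain construction (the paper flags this explicitly, and re-enacts that construction later in Theorems \ref{thm:REFINES} and \ref{thm:PARAM1}). In both the paper and the cited source, uniform local arcwise connectivity of a Peano continuum (Theorem \ref{thm:PCULAC}) is \emph{derived from} Theorem \ref{thm:PCARC} by a Lebesgue-number argument; the paper literally says ``one can now prove the following.'' So when you take ULAC of the ambient continuum as your starting point, you are assuming a statement that, in this development, is a corollary of the very theorem you are proving, and which is essentially equivalent to it (your own second paragraph plus the paper's Lebesgue-number remark show the two are interderivable by soft arguments). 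The genuinely hard content of Theorem \ref{thm:PCARC} --- producing \emph{any} arc between two points of a connected open set, via successively refined chains of connected open sets and a nested-intersection limit --- is exactly what you defer to the citation, as your closing paragraph concedes. As a self-contained proof, this begs the question; as a contribution to the paper, it would create a circular dependency between Theorems \ref{thm:PCARC} and \ref{thm:PCULAC}.

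The soft part of what you wrote is fine and would be a correct deduction if ULAC were available as an independent black box: the relation $a \sim b$ iff $a=b$ or an arc in $U$ joins them is an equivalence relation (the concatenation lemma you invoke, ``follow $A_1$ until it first meets the closed set $A_2$,'' is standard), each class is open since an arc of diameter less than $\epsilon$ through $y$ lies in $B_\epsilon(y) \subseteq U$, and connectedness of $U$ then forces a single class. Two small repairs even there: Proposition \ref{prop:COMPACT} concerns compact subsets of $\R^n$, whereas Theorem \ref{thm:PCARC} is about an abstract Peano continuum, and for $C=\{y\}$ you need nothing beyond the definition of openness in the metric of $X$ (fix a metric; there is no ambient Euclidean space to take closures in); and the neighborhood witnessing openness of the class of $y$ should be $B_{\min(\delta,\epsilon)}(y)$ so that the points you join to $y$ are themselves in $U$. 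But these are cosmetic next to the main issue: to prove Theorem \ref{thm:PCARC} honestly you must carry out (or cite in place of the theorem itself, not in place of ULAC) the chain-refinement construction of an arc.
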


By a simple application of the Lebesgue Number Theorem, one can now prove the following.

\begin{theorem}\label{thm:PCULAC}
Every Peano continuum is uniformly locally arcwise connected.
\end{theorem}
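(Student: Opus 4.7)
The plan is a textbook Lebesgue-number argument. Fix $\epsilon > 0$. For each $p \in X$, the ball $B_{\epsilon/3}(p)$ is open in $X$, so by Theorem \ref{thm:LC} its connected component containing $p$, call it $W_p$, is open in $X$. By construction $W_p$ is a connected open subset of the Peano continuum $X$ contained in $B_{\epsilon/3}(p)$, hence $\diam(W_p) \le 2\epsilon/3 < \epsilon$. By Theorem \ref{thm:PCARC}, $W_p$ is arcwise connected.

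Next I would observe that $\{W_p\}_{p \in X}$ is an open cover of $X$, since $p \in W_p$. Because $X$ is a continuum it is compact and metrizable, so the Lebesgue Number Theorem applies: there exists $\delta > 0$ such that every subset of $X$ of diameter less than $\delta$ is contained in some $W_p$.

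Now, given distinct $x,y \in X$ with $d(x,y) < \delta$, the set $\{x,y\}$ has diameter less than $\delta$, so $\{x,y\} \subseteq W_p$ for some $p$. Since $W_p$ is arcwise connected, there is an arc $A \subseteq W_p$ from $x$ to $y$, and $\diam(A) \le \diam(W_p) < \epsilon$. This furnishes the required $\delta$, proving ULAC.

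The argument has no real obstacle; the one small point requiring care is ensuring the strict inequality $\diam(A) < \epsilon$, which is why I take $W_p$ inside $B_{\epsilon/3}(p)$ rather than inside $B_{\epsilon/2}(p)$. The only non-trivial ingredients are Theorem \ref{thm:LC} (to guarantee the components $W_p$ are open) and Theorem \ref{thm:PCARC} (to upgrade connectedness of $W_p$ to arcwise connectedness), both of which are available.
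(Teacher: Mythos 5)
Your proof is correct and is exactly the ``simple application of the Lebesgue Number Theorem'' the paper alludes to for this statement: cover $X$ by open connected components of small balls (open by Theorem \ref{thm:LC}, arcwise connected by Theorem \ref{thm:PCARC}), take a Lebesgue number, and note the $\epsilon/3$ radius keeps the arc diameter strictly below $\epsilon$. This is also the same scheme the paper carries out effectively in the proof of Theorem \ref{thm:EULAC}, so no further comment is needed.
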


Theorem \ref{thm:PCARC} is proven by means of \emph{simple chains}.  These 
will be a valuable tool for us as well.  We define them here.  

\begin{definition}\label{def:SIMPLE.CHAIN}
Let $(U_1, \ldots, U_k)$ be a sequence of sets. 
\begin{enumerate}
	\item $(U_1, \ldots, U_k)$ is a \emph{chain} if 
$U_i \cap U_{i+1} \neq \emptyset$ whenever $1 \leq i < k$. 

	\item $(U_1, \ldots, U_k)$ is a \emph{simple chain} if 
 $U_i \cap U_j \neq \emptyset$ precisely when  
$|i - j| \leq 1$. 
\end{enumerate} 
\end{definition}

\begin{proposition}\label{prop:CHAIN.UNION}
If $(C_1, \ldots, C_k)$ is a chain of connected sets, then 
$C_1 \cup \ldots \cup C_k$ is connected.
\end{proposition}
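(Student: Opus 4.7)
My plan is to proceed by induction on $k$, the length of the chain. The base case $k=1$ is immediate: a single connected set is connected. For the inductive step, the key tool is the classical lemma that if $A$ and $B$ are connected subsets of a topological space with $A \cap B \neq \emptyset$, then $A \cup B$ is connected.

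I would first establish this lemma. Suppose for contradiction that $A \cup B = E_1 \cup E_2$ is a separation, where $E_1, E_2$ are nonempty disjoint sets each open in $A \cup B$. Since $A$ is connected and $A \subseteq E_1 \cup E_2$, either $A \subseteq E_1$ or $A \subseteq E_2$, and likewise for $B$. If $A$ and $B$ lie in opposite pieces, then $A \cap B \subseteq E_1 \cap E_2 = \emptyset$, contradicting $A \cap B \neq \emptyset$. So both lie in the same piece, making the other piece empty, again a contradiction.

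For the inductive step, assume the proposition for chains of length $k-1$. Given a chain $(C_1, \ldots, C_k)$ of connected sets, the subsequence $(C_1, \ldots, C_{k-1})$ is still a chain of connected sets, so by the induction hypothesis $D := C_1 \cup \ldots \cup C_{k-1}$ is connected. Since the chain condition gives $C_{k-1} \cap C_k \neq \emptyset$, we have $D \cap C_k \supseteq C_{k-1} \cap C_k \neq \emptyset$. Applying the lemma to $D$ and $C_k$ shows that $D \cup C_k = C_1 \cup \ldots \cup C_k$ is connected.

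There is no real obstacle here; the argument is entirely standard. The only point requiring a small amount of care is the lemma on unions of two intersecting connected sets, whose proof must be written with respect to the subspace topology on $A \cup B$ so that the separation argument is formally correct.
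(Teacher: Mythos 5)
Your proof is correct, but it is organized differently from the paper's. You induct on the length of the chain and reduce everything to the two-set lemma (the union of two intersecting connected sets is connected), which you prove by a separation argument in the subspace topology. The paper instead argues directly: assuming a separation $U,V$ of the whole union, it notes that each $C_j$, being connected, must lie entirely in $U$ or entirely in $V$, takes the largest index $j_0$ such that $C_1,\ldots,C_{j_0}\subseteq U$, and derives a contradiction from a point of $C_{j_0}\cap C_{j_0+1}$, which would have to lie in $U\cap V$ intersected with the union. The underlying mechanism is the same --- connectivity forces each link to sit on one side of the separation, and the chain condition passes that side along consecutive links --- but the paper handles all links at once inside a single separation via a maximal-index argument, while you pay for a cleaner inductive step by having to state and prove the auxiliary two-set lemma separately. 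Both arguments are complete; your version is slightly more modular, the paper's slightly shorter since it never isolates the lemma.
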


\begin{proof}
Let $C = C_1 \cup \cdots \cup C_k$.  By way of contradiction, suppose $C$ is disconnected.  Then, there exist open sets $U,V$ such that 
$C \subseteq U \cup V$, $C \cap U \neq \emptyset$, $C \cap V \neq \emptyset$, and 
$C \cap U \cap V = \emptyset$.

We first observe that for each $1 \leq j \leq k$, $C_j \subseteq U$ or $C_j \subseteq V$.  For, let $1 \leq j \leq k$.  Hence, $C_j \subseteq U \cup V$ and $C_j \cap U \cap V = \emptyset$.
Since $C_j$ is connected, it follows that $C_j \subseteq U$ or $C_j \subseteq V$.

Without loss of generality, suppose $C_1 \subseteq U$.
Let $j_0$ be the largest integer such that $C_i \subseteq U$ whenever 
$1 \leq i \leq j_0$.  Hence, $j_0 < k$.  Let $p \in C_{j_0} \cap C_{j_0+1}$. 
Since $p \in C_{j_0}$, $p \in U$.  On the other hand, by maximality of 
$j_0$, $C_{j_0+1} \not \subseteq U$.  So, $C_{j_0+1} \subseteq V$.  Hence, 
$p \in C \cap U \cap V$- a contradiction.  It follows that $C$ is connected.
\QED\end{proof}

We define some associated terminology.

\begin{definition}\label{def:SCFROM}
Suppose $(U_1, \ldots, U_k)$ is a simple chain.
If 
\[
x \in U_1 - \bigcup_{1 < j \leq k} U_j, 
\]
and if 
\[
y \in U_k - \bigcup_{1 \leq j < k} U_j,
\]
then we say that $(U_1, \ldots, U_k)$ is a simple chain \emph{from $x$ to $y$}.
\end{definition}

\begin{definition}\label{def:SIMPLE.CHAIN.DIAM}
Suppose $(U_1, \ldots, U_k)$ is a simple chain of subsets of $\R^n$ and that each 
$U_j$ is bounded.
 The \emph{diameter} of $(U_1, \ldots, U_k)$ is 
the maximum of $\diam(U_1)$, $\ldots$, $\diam(U_k)$.
\end{definition}

A key fact about simple chains is the following which is Theorem 3.4 of \cite{Hocking.Young.1961}.

\begin{theorem}\label{thm:SIMPLE.CHAIN}
If $\{U_\alpha\}_{\alpha \in I}$ is a covering of a connected space $X$ by open
sets, and if $x,y \in X$, then there exist $\alpha_1, \ldots, \alpha_k \in I$ such that 
$(U_{\alpha_1}, \ldots, U_{\alpha_k})$ is a simple chain
from $x$ to $y$.
\end{theorem}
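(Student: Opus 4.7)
The plan is to prove this in two stages: first establish that a chain (in the sense of Definition \ref{def:SIMPLE.CHAIN}(1)) from $x$ to $y$ drawn from the cover exists at all, then refine any such chain to a simple chain with the correct endpoint behavior by a minimality argument.

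For the first stage, fix $x \in X$ and let $S$ denote the set of all $z \in X$ for which there exist $\alpha_1, \ldots, \alpha_m \in I$ with $x \in U_{\alpha_1}$, $z \in U_{\alpha_m}$, and $(U_{\alpha_1}, \ldots, U_{\alpha_m})$ a chain. Clearly $x \in S$ since $\{U_\alpha\}_{\alpha \in I}$ covers $X$ and a one-element chain suffices. I claim $S$ is both open and closed in $X$. For openness, if $z \in S$ via a chain ending in $U_{\alpha_m}$, then every $z' \in U_{\alpha_m}$ is also reached by this same chain, so $U_{\alpha_m} \subseteq S$. For closedness of $S$, I show $X \setminus S$ is open: if $z \notin S$ and $U_\alpha$ is any element of the cover containing $z$, then no point of $U_\alpha$ can lie in $S$, for if $z' \in U_\alpha \cap S$ via a chain $(U_{\alpha_1}, \ldots, U_{\alpha_m})$, then appending $U_\alpha$ (or terminating at $U_{\alpha_m}$ if $U_\alpha = U_{\alpha_m}$) would produce a chain witnessing $z \in S$. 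Since $X$ is connected and $S$ is a nonempty clopen subset, $S = X$, and in particular $y \in S$.

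For the second stage, among all chains $(U_{\alpha_1}, \ldots, U_{\alpha_k})$ from the cover with $x \in U_{\alpha_1}$ and $y \in U_{\alpha_k}$, choose one of minimum length $k$. I claim it is a simple chain from $x$ to $y$. Suppose for contradiction that $U_{\alpha_i} \cap U_{\alpha_j} \neq \emptyset$ for some indices with $j - i \geq 2$. Then $(U_{\alpha_1}, \ldots, U_{\alpha_i}, U_{\alpha_j}, \ldots, U_{\alpha_k})$ is a shorter chain from $x$ to $y$, contradicting minimality. Hence $U_{\alpha_i} \cap U_{\alpha_j} \neq \emptyset$ precisely when $|i-j| \leq 1$. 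Similarly, if $x \in U_{\alpha_j}$ for some $j > 1$, then $(U_{\alpha_j}, \ldots, U_{\alpha_k})$ would be a shorter chain violating minimality; and symmetrically $y$ cannot lie in any $U_{\alpha_j}$ for $j < k$. This gives exactly the conditions of Definition \ref{def:SCFROM}.

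I expect the only real subtlety to be in the clopen argument of stage one; the rest is bookkeeping. In particular, care is needed in stage one to treat the case $U_\alpha = U_{\alpha_m}$ correctly so that one does not inadvertently produce a degenerate chain, but this is handled by allowing the chain itself to serve as its own extension.
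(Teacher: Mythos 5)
Your argument is correct. Note that the paper does not prove this statement at all; it is quoted as Theorem 3-4 of \cite{Hocking.Young.1961}, so the relevant comparison is with the classical proof there. That proof runs the connectedness (clopen) argument directly on \emph{simple} chains: one defines the set of points joinable to $x$ by a simple chain of cover elements and, in verifying openness and closedness, must truncate or splice chains on the spot to preserve simplicity. You instead decompose the work: the clopen argument is done with arbitrary chains in the sense of Definition \ref{def:SIMPLE.CHAIN}(1), where it is essentially trivial (append a cover element), and all the simplification is concentrated in a single minimality argument, which simultaneously delivers the non-adjacent-disjointness condition and the endpoint conditions of Definition \ref{def:SCFROM} (if $x$ lay in a later link or $y$ in an earlier one, dropping the prefix or suffix would shorten the chain). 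This is a genuinely cleaner bookkeeping scheme, at the cost of a second selection step; the classical route produces the simple chain ``as you go.'' Two small points worth making explicit in a final write-up: openness of $S$ uses that the $U_\alpha$ are open (the last link is an open neighborhood of each of its points), and the ``precisely when'' clause also needs $U_{\alpha_i}\cap U_{\alpha_i}\neq\emptyset$, which holds because every link of a chain is nonempty (it meets a neighbor, or contains $x$ when $k=1$); the degenerate case $x=y$, which the theorem does not exclude, is covered by the length-one chain.
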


\section{Background from computable analysis and computable topology}\label{sec:TTE}

A \emph{rational interval} is an open interval whose endpoints are rational numbers.

A \emph{rational point} in $\R^n$ is a point whose co\"ordinates are all rational numbers.

 An \emph{$n$-dimensional rational box} is a set of 
the form $(a_1, b_1) \times \cdots \times (a_n, b_n)$ where 
$a_1, b_1, \ldots, a_n, b_n \in \Q$ and $a_i < b_i$ for all $i$.
So, when $n=1$, an $n$-dimensional rational box is a rational interval. 

A function $f : [0,1] \rightarrow \R^n$ is a \emph{rational polygonal curve} if
there are rational points $q_1, \ldots, q_k \in \R^n$ and rational numbers
\[
0 = t_0 < t_1 < \ldots < t_k = 1
\]
such that whenever $x \in [t_j, t_{j+1}]$, 
\[
f(x) = q_j + \frac{(x - t_j)}{(t_{j+1} - t_j)}(q_{j+1} - q_j).
\]

Let $\Sigma$ be a finite alphabet that contains $0,1$.   Let $f : \subseteq A \rightarrow B$ denote that $\dom(f) \subseteq A$ and $\ran(f) \subseteq B$.  
A \emph{a naming system} for a space $M$ is a surjection $\nu : \subseteq \Sigma^a \rightarrow M$ where
$a$ is either $*$ or $\omega$.  We will use the following naming systems only.  Precise definitions can be found in \cite{Weihrauch.2000}, but we will work with informal yet sufficiently rigorous summaries.
\begin{enumerate}
	\item $\rho^n$ for $\R^n$.  Informally, a $\rho^n$-name for a point $x \in \R^n$ is a 
	list of all rational boxes to which $x$ belongs.

	\item $\kappa_{mc}$ for the compact subsets of $\R^n$.  Informally, a 
	$\kappa_{mc}$-name of a compact $X \subseteq \R^n$ is a 
	list of all finite covers of $X$ by rational boxes \it each of which contains at least one point of $X$\rm.  As in  Section 5.2 of \cite{Weihrauch.2000}, such a covering will be called \emph{minimal}.
	
	\item $\delta_{C}$ for 
	$C([0,1], \R^n)$, the space of all continuous functions from $[0,1]$ into $\R^n$.  Informally, a $\delta_{C}$-name of a 
	continuous function $f :[0,1] \rightarrow \R^n$ is a sequence of 
rational polygonal curves $\{F_t\}_{t \in \N}$ which satisfies the condition
\[
s \geq t\ \Rightarrow\ d_{max}(F_t, F_s) \leq 2^{-t}
\]
and such that $f = \lim_{t \rightarrow \infty} F_t$.  Here, 
\[
d_{max}(f,g) = \max\{d(f(x), g(x))\ |\ x \in [0,1]\}.
\]

	\item A name of a function $f : \N \rightarrow \N$ is an oracle Turing machine which computes $f$.  Such a name consists of a code of a Turing machine $M$ and an oracle $A \subseteq \N$ such that $M$ computes $f$ when supplied with oracle $A$.

	\item We use any standard naming system for $\N$ such as the tally representation.  In this case, we identify each number with its name.
\end{enumerate}

Since these are the only naming systems we will use, we 
will suppress their mention when discussing the computability
of objects and functions. 

Suppose $X_0$ and $X_1$ are spaces for which we have established naming systems.
Informally, a function $f : \subseteq X_0 \rightarrow X_1$ is \emph{computable}
if there is a Turing machine $M$ with an input and output tape and with the property that whenever a name of a point $p \in \dom(f)$ is written on the input tape and $M$ is allowed to run indefinitely, $M$ writes a name of $f(p)$ on the output tape subject to the restriction that whenever $M$ writes a symbol on the output tape, it can not later change that symbol.  Details can be found in \cite{Weihrauch.2000}.

Accordingly, when $a,b \in \{*, \omega\}$ and $f : \subseteq \Sigma^a \rightarrow \Sigma^b$, a name of $f$ consists of an oracle Turing machine which computes $f$ in the sense above.  We use a special symbol which does not belong to $\Sigma$ to terminate finite strings.

We will make frequent use of the following principle: computation of maxima and 
minima of continuous functions on compact sets is computable. 
See, for example, Corollary 6.2.5 of \cite{Weihrauch.2000}.

We also make copious use of the Principle of Type Conversion.  Informally stated, this means that to compute a name of a function $f$ from data $X_1, \ldots, X_n$, it suffices to show that one can uniformly compute $f(x)$ from $X_1, \ldots, X_n, x$.  See \cite{Weihrauch.2000} for details.

If $X \subseteq \R^n$, then for each $z\in X$ and each $U \subseteq \R^n$ such that $z \in U \cap X$, 
let $C_z^X(U)$ denote the connected component of $z$ in $U \cap X$.  Before 
proceeding further, we make a small observation about these components.

\begin{proposition}\label{prop:CONTAINED}
If $U, V$ are subsets of $\R^n$ such that $z \in U \cap V \cap X$, and if $U \subseteq V$, then 
$C_z^X(U) \subseteq C_z^X(V)$.
\end{proposition}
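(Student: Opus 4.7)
The plan is to argue directly from the definition of a connected component as a maximal connected subset. The proposition is essentially a monotonicity statement, so no real construction is needed.

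First I would observe that, by definition, $C_z^X(U)$ is a connected subset of $U \cap X$ that contains $z$. Since $U \subseteq V$, we have $U \cap X \subseteq V \cap X$, and so $C_z^X(U)$ is also a connected subset of $V \cap X$ that contains $z$. Next I would invoke the fact (recorded in the preceding topological background) that the connected component $C_z^X(V)$ of $z$ in $V \cap X$ is the \emph{maximal} connected subset of $V \cap X$ containing $z$; that is, every connected subset of $V \cap X$ which contains $z$ is contained in $C_z^X(V)$. Applying this with the connected set $C_z^X(U)$ yields the desired inclusion $C_z^X(U) \subseteq C_z^X(V)$.

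There is no real obstacle here: the whole argument is an unwinding of the definition of connected component together with the trivial monotonicity $U \cap X \subseteq V \cap X$. The only thing worth being careful about is to keep track of the fact that components are taken inside $U \cap X$ (respectively $V \cap X$) rather than inside $U$ or $V$ directly, but this is handled automatically by intersecting with $X$ on both sides before comparing.
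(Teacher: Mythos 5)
Your proposal is correct and follows essentially the same route as the paper: observe that $C_z^X(U)$ is a connected subset of $V \cap X$ containing $z$ (via $U \cap X \subseteq V \cap X$), then conclude by maximality of the component $C_z^X(V)$. No gaps.
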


\begin{proof}
Let $C = C_z^X(U)$.  Hence, $C$ is a connected subset of $U \cap X$.
Since $U \cap X$ is a subspace of $V \cap X$, $C$ is a connected set in 
$V \cap X$.  Since $z \in C$, it follows that $C \subseteq C_z^X(V)$.  
\QED\end{proof}

\begin{definition}\label{def:ELC}
Suppose $X \subseteq \R^n$.
\begin{enumerate}
	\item A \emph{local connectivity (LC) function for $X$} is a function $f : \N \rightarrow \N$ such that for every $k \in \N$ and every 
$p \in X$, 
\[
X \cap B_{2^{-f(k)}}(p) \subseteq C_p^X(B_{2^{-k}}(p)).
\]

	\item We say that $X$ is \emph{effectively locally connected} if 
it has a computable LC function.
\end{enumerate}
\end{definition}

Note that if $f$ is an LC function for $X$, and if $g \geq f$, then $g$ is an 
LC function for $X$.  Hence, from an LC function for $X$ we can compute 
an increasing LC function for $X$.

This definition of effective local connectivity is due to J. Miller \cite{Miller.2002.4springer}.  
A more general version of this definition, which looks much more like an effectivization of the usual
definition of local connectivity, is given in \cite{Brattka.2008}.
In fact, the definition given here looks much more like an effective rendition of the 
related concept of ``connected \emph{im kleinen}".  It is fairly well-known that these notions
are equivalent over compact metric spaces, and these are the only spaces we shall consider
in this paper.  A proof that Definition \ref{def:ELC} is equivalent over compact spaces to a straightforward 
effectivization of the usual definition of local connectivity over compact spaces will be given in a subsequent paper.

We propose two effective versions of the notion of a ULAC space.  The second is, \it prima facie\rm, a strengthening of the first.  We will later show they are equivalent.

\begin{definition}\label{def:EULAC}
Suppose $X \subseteq \R^n$.
\begin{enumerate}
	\item A \emph{ULAC function for $X$} is a function $f : \N \rightarrow \N$
with the property that for every $k \in \N$ and 
all distinct $x,y \in X$, if $d(x,y) \leq 2^{-f(k)}$, then there is an arc in $X$
from $x$ to $y$ whose diameter is smaller than $2^{-k}$.

	\item We say that $X$ is \emph{effectively uniformly locally arcwise connected (EULAC)} if it has a computable ULAC function.
\end{enumerate}
\end{definition}

Again, note that if $f$ is a ULAC function for $X$, and if $g \geq f$, then 
$g$ is a ULAC function for $X$.  Hence, from a ULAC function for $X$ we can compute 
an increasing ULAC function for $X$.

For the sake of stating the following definition, we note again that names can be encoded as points in $\Sigma^\omega$ where $\Sigma$ is a finite alphabet that contains $0,1$.  Details can be found in \cite{Weihrauch.2000}.  

\begin{definition}\label{def:SEULAC}
Suppose $X \subseteq \R^n$.
\begin{enumerate}
	\item A \emph{SULAC witness for $X$} is a pair of functions $(f,\Phi)$
such that $f$ is a ULAC function for $X$, $\Phi : \subseteq \N \times \Sigma^\omega \times \Sigma^\omega \rightarrow \Sigma^\omega$, and, for all $k \in \N$, whenever
$p,q$ are names of distinct points $x,y$ in $X$ such that $d(x,y) < 2^{-f(k)}$, 
$\Phi(k, p,q)$ is a name of a parametrization of an arc in $X$ from $x$ to $y$ whose diameter is smaller than $2^{-k}$.   

	\item $X$ is \it strongly effectively uniformly locally arcwise connected (SEULAC)\rm\ if it has a computable SULAC witness.
\end{enumerate}
\end{definition}

Our main result is that all three properties ELC, EULAC, and SEULAC are equivalent for computable Euclidean continua.

\section{ELC and EULAC spaces}

In this section, we show that $X$ is ELC if and only if it is 
EULAC.

Recall that a \emph{Lebesgue number} for a covering $\{U_\alpha\}_{\alpha \in I}$ of a
compact set $C$ is a number $\delta > 0$ such that whenever 
$x,y \in C$ and $d(x,y) < \delta$, there exists $\alpha \in I$ such that 
$x,y \in U_\alpha$.  The Lebesgue number Lemma asserts that every open cover of a compact metric space has a Lebesgue number.

\begin{lemma}[\bf Computable Lebesgue Number Lemma\rm]\label{lm:ELN}
It is possible to uniformly compute, from a name of a compact set $X \subseteq \R^n$ and a finite covering of $X$ by rational boxes, an $L \in \N$
such that $2^{-L}$ is a Lebesgue number for this covering.
\end{lemma}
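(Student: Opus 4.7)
The plan is to effectivize the classical proof of the Lebesgue Number Lemma. Classically, given a finite open cover $\{B_1,\ldots,B_m\}$ of a compact set $X$, one considers the continuous function
\[
\phi(x) = \max_{1 \leq i \leq m} d(x, \R^n \setminus B_i),
\]
which is strictly positive on $X$ since each $x \in X$ belongs to some $B_i$. Its minimum $\mu$ over $X$ is therefore positive, and any $\delta < \mu$ is a Lebesgue number: if $d(x,y) < \delta$ with $x,y \in X$, pick $i$ with $d(x,\R^n \setminus B_i) \geq \mu > \delta$; then both $x$ and $y$ lie in $B_i$.

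The first step in the effective version is to observe that $\phi$ is a computable real-valued function on $\R^n$, uniformly in the rational boxes. For a rational box $B = (a_1,b_1)\times\cdots\times(a_n,b_n)$, one checks that
\[
d(x, \R^n \setminus B) = \max\bigl(0,\ \min_{1 \leq j \leq n}\min(x_j - a_j,\ b_j - x_j)\bigr),
\]
which is plainly computable in $x$ and the rational endpoints. Finite maxima of computable real-valued functions are computable, so $\phi$ is computable uniformly in the list of boxes.

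The second step is to compute $\mu := \min_{x \in X} \phi(x)$ from a $\kappa_{mc}$-name of $X$, using the cited principle that minima of continuous functions on computably compact sets are uniformly computable (Corollary 6.2.5 of \cite{Weihrauch.2000}). Since $\mu > 0$ by the classical argument, the final step is to convert this positive real into an integer $L$ with $2^{-L} < \mu$. Any name of $\mu$ eventually produces a rational lower approximation that is positive (because $\mu > 0$); one may halt the search at the first such approximation $q$ and output any $L$ with $2^{-L} < q$.

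The main subtlety lies in this last extraction step, since in general one cannot computably decide the positivity of a real number. What makes the argument work here is that the classical Lebesgue Number Lemma is invoked purely to certify termination: it guarantees $\mu > 0$, so the search for a positive rational lower bound is sure to halt on every valid input, even though we have no a priori bound on how long it takes.
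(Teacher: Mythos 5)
Your proof is correct, but it follows a different route from the paper's. You effectivize the textbook argument: the function $\phi(x)=\max_i d(x,\R^n\setminus R_i)$ is computable uniformly in the rational boxes (your closed-form for the distance to the complement of a box is right), its minimum $\mu$ over $X$ is computable from the $\kappa_{mc}$-name by the cited extremum principle (Corollary 6.2.5 of Weihrauch, which the paper itself invokes as a general tool), and since $\mu>0$ classically, an unbounded but terminating search through a name of $\mu$ yields a positive rational lower bound and hence $L$ with $2^{-L}<\mu$. The paper instead works directly with the combinatorics of the $\kappa_{mc}$-name: it searches the listed minimal covers for a refined cover $\{S_1,\ldots,S_k\}$ such that each $B_{\diam(S_i)}(S_i)$ lies inside some $R_j$ (a condition decidable from the rational data), with the classical existence of such boxes around every point of $X$ certifying that the search halts, and then takes $2^{-L}$ below all the $\diam(S_i)$. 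Both arguments use a classical fact only to guarantee termination of a search; yours is the more direct effectivization of the standard proof and in fact produces arbitrarily good lower bounds on the optimal Lebesgue number, while the paper's is more elementary and self-contained, needing nothing beyond the structure of minimal covers and avoiding the computable max/min machinery at this point.
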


\begin{proof}
Fix a name of $X$.
Suppose we are given a finite covering of $X$, 
\[
\{R_1, \ldots, R_m\}, 
\]
where $R_1, \ldots, R_m$ are rational boxes.

We first claim that for each $p \in X$, there is a rational box $S$ 
such that $p \in S$ and $B_\delta(S) \subseteq R_j$ for some $j$ where 
$\delta = \diam(S)$.  For, let $p \in R_j$.  Let $\delta'$ be a positive number 
such that $B_{\delta'}(p) \subseteq R_j$.  Let $S$ be a rational box of diameter less than $\delta'/2$ such that $p \in S \subseteq R_j$.
Let $\delta = \diam(S)$.  Suppose $q \in B_\delta(S)$.  Then, there exists
$q' \in S$ such that $d(q, q') < \delta < \delta'/2$.  But, 
$d(q', p) < \delta' / 2$.  So, $d(q, p) < \delta'$.  Hence, 
$q \in B_{\delta'}(p) \subseteq R_j$.  

So, we begin our computation by searching the name of $X$ until we find a finite covering of $X$ 
$\{S_1, \ldots S_k\}$ such that for each $i \in \{1, \ldots, k\}$ there exists $j \in \{1, \ldots, m\}$ such that 
\[
B_{\delta_i}(S_i) \subseteq R_j
\]
where $\delta_i = \diam(S_i)$.
It follows that this search must terminate.  Compute $L \in \N$ such that 
$2^{-L} < \delta_1, \ldots, \delta_k$.

We claim that $2^{-L}$ is a Lebesgue number for the covering 
$R_1, \ldots, R_m$.  For, suppose $p,q \in X$ are such that $d(p,q) < 2^{-L}$.
Then, if $p \in S_i$, and if $B_{\delta_i}(S_i) \subseteq R_j$, it follows
that $p$ and hence $q$ belong to $R_j$.  
\QED\end{proof}

\it Throughout the rest of this paper, we assume $X \subseteq \R^n$ is a continuum.\rm\ Henceforth, we abbreviate $C_z^X(U)$ with $C_z(U)$.

\begin{theorem}\label{thm:EULAC} 
From a name of $X$ and an LC function for $X$, we can uniformly compute a name of a ULAC function for $X$.
\end{theorem}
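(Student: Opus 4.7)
The plan is to notice that the content of the theorem is essentially classical: the existence of an LC function already encodes topological local connectedness, so $X$ is a Peano continuum, and Theorem~\ref{thm:PCARC} supplies the required arcs; producing the ULAC function is then just a shift of indices.

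First I would verify that the existence of an LC function $f$ (computable or not) forces $X$ to be locally connected in the sense of Definition~\ref{def:LOC.CONNECTED}. Given $p \in X$ and an $X$-neighborhood $U$ of $p$, pick $k$ so that $B_{2^{-k}}(p) \cap X \subseteq U$. Then $C_p(B_{2^{-k}}(p))$ is a connected subset of $U$ containing $p$, and by the defining property of $f$ it contains the $X$-neighborhood $X \cap B_{2^{-f(k)}}(p)$ of $p$. Hence $X$ is locally connected, and combined with the standing assumption that $X$ is a continuum, $X$ is a Peano continuum in the classical sense.

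Then I would set $g(k) = f(k+2) + 1$ and verify that $g$ is a ULAC function for $X$. For distinct $x,y \in X$ with $d(x,y) \leq 2^{-g(k)} < 2^{-f(k+2)}$, the LC property places $y$ in $C_x(B_{2^{-(k+2)}}(x))$; call this component $C$. By Theorem~\ref{thm:LC}, $C$ is open in $X$, and as a connected open subset of the Peano continuum $X$ it is arcwise connected by Theorem~\ref{thm:PCARC}. Any arc in $C$ from $x$ to $y$ lies inside $B_{2^{-(k+2)}}(x)$, so its diameter is at most $2 \cdot 2^{-(k+2)} = 2^{-(k+1)} < 2^{-k}$. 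Uniform computability of $g$ from $f$ is a trivial program transformation (shift argument by $2$, add $1$), and in fact the name of $X$ plays no role here, although it will become essential in the later theorems that must actually produce the arc rather than merely witness its existence. The only real obstacle is conceptual: one must recognize that Definition~\ref{def:ELC} already packages, in a single arithmetic function, precisely the uniform information that the classical proof of Theorem~\ref{thm:PCULAC} extracts from the Lebesgue number lemma.
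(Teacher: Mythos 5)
Your argument is correct, but it takes a genuinely different and more direct route than the paper's. The paper effectivizes the classical Lebesgue-number proof of Theorem \ref{thm:PCULAC}: given $k$, it searches the name of $X$ for a minimal cover by rational boxes of diameter less than $2^{-f(k+1)}$, applies the Computable Lebesgue Number Lemma (Lemma \ref{lm:ELN}) to obtain $L$ with $2^{-L}$ a Lebesgue number for that cover, and sets $g(k)=L$; two points within $2^{-L}$ then lie in a common box, hence within $2^{-f(k+1)}$ of each other, and from that point on the verification (the LC function places $y$ in $C_x(B_{2^{-(k+1)}}(x))$, which is open in $X$ by Theorem \ref{thm:LC} and arcwise connected by Theorem \ref{thm:PCARC}) is exactly the argument you give. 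Your observation is that the Lebesgue-number detour is dispensable because Definition \ref{def:ELC} is already uniform in the point: the index shift $g(k)=f(k+2)+1$ works outright, the $+1$ correctly converting the non-strict inequality of Definition \ref{def:EULAC} into membership in the open ball $B_{2^{-f(k+2)}}(x)$. This yields a slightly stronger conclusion than the theorem states --- the ULAC function is computable from the LC function alone, with no use of the name of $X$, and with a typically smaller modulus --- while the paper's version buys a proof that runs parallel to the classical one and exercises Lemma \ref{lm:ELN}, on which nothing later depends for this purpose. One point worth making explicit in your write-up: what the LC function directly gives at each point is connectedness im kleinen (a connected set containing a neighborhood of the point, not necessarily an open one); passing from that to local connectivity of $X$, which is what licenses Theorems \ref{thm:LC} and \ref{thm:PCARC}, uses the standard equivalence of the two notions, acknowledged in the paper after Definition \ref{def:ELC} and relied on implicitly in its own proof, so your appeal to it is legitimate.
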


\begin{proof}
Let $f : \N \rightarrow \N$ be an LC function for $X$.
We can assume $f$ is increasing.

We compute a name of a function $g : \N \rightarrow \N$ as follows.  We use the Principle of Type Conversion.    
Let $k \in \N$ be given as additional input.  We search the name of $X$
until we find a minimal cover of $X$, $\{R_1, \ldots, R_m\}$,
with the additional property that the diameter of each $R_i$ is 
smaller than $2^{-f(k+1)}$.  By Lemma \ref{lm:ELN}, we can then uniformly 
compute $L \in \N$ such that $2^{-L}$ is a Lebesgue number for this 
covering.
Set $g(k) = L$.

We claim that $g$ is a ULAC function for $X$.  For, let $k \in \N$, and let 
$R_1, \ldots, R_m, L$ be as in the definition of $g(k)$ above.
Suppose $p,q$ are distinct points in $X$ such that $d(p,q) < 2^{-g(k)}$.
Hence, $p,q \in R_i$ for some $i$.  Hence, $d(p,q) < 2^{-f(k+1)}$.  
We must show that there is an 
arc in $X$ from $p$ to $q$ whose diameter is less than $2^{-k}$.  
Since $f$ is an LC function for $X$, we have
\[
q \in B_{2^{-f(k+1)}}(p) \cap X \subseteq C_p(B_{2^{-(k+1)}}(p)).
\]
It follows from Theorem \ref{thm:LC} that $C_p(B_{2^{-(k+1)}}(p))$ is an open
subset of $X$.  It then follows from Theorem \ref{thm:PCARC} that 
it is arcwise connected.  So, there is an arc from $p$ to $q$ in 
$C_p(B_{2^{-(k+1)}}(p))$.  Since this arc is contained in $B_{2^{-(k+1)}}(p)$, its diameter is less than $2^{-k}$.
\QED\end{proof}

\begin{corollary}
Every computably compact ELC continuum is EULAC.
\end{corollary}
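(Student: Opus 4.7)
The plan is that this corollary should follow almost immediately from Theorem \ref{thm:EULAC}, so the proof will essentially amount to unpacking definitions and invoking that theorem. The substantive work — producing a ULAC function from an LC function together with a name of the space — has already been done.

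First, I would fix a computably compact ELC continuum $X \subseteq \R^n$. Computable compactness, as made precise by the naming system $\kappa_{mc}$ in Section \ref{sec:TTE}, gives us a computable $\kappa_{mc}$-name of $X$; that is, we have access to the list of minimal finite covers of $X$ by rational boxes. Effective local connectivity, by Definition \ref{def:ELC}, supplies us with a computable LC function $f : \N \to \N$ for $X$. These two data are precisely the inputs required by Theorem \ref{thm:EULAC}.

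Next, I would invoke Theorem \ref{thm:EULAC} on this pair of inputs. That theorem produces, uniformly in a name of $X$ and an LC function for $X$, a name of a ULAC function $g : \N \to \N$ for $X$. Since both inputs are computable, the output $g$ is computable as well (this is just the observation that a uniform procedure applied to computable data yields computable output, which is immediate from the definition of computability via oracle Turing machines discussed in Section \ref{sec:TTE}). By Definition \ref{def:EULAC}, the existence of a computable ULAC function is exactly what it means for $X$ to be EULAC.

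There is no real obstacle here: the only thing to check is that the hypotheses of Theorem \ref{thm:EULAC} are satisfied and that composing computable objects with uniform computable procedures yields computable objects. The entire topological and effective content — the use of the Lebesgue number, the passage through $C_p(B_{2^{-(k+1)}}(p))$, and the appeal to Theorem \ref{thm:PCARC} — is already packaged inside Theorem \ref{thm:EULAC}.
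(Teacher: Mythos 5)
Your proof is correct and matches the paper's (implicit) argument: the corollary is stated in the paper without proof precisely because it follows immediately from Theorem \ref{thm:EULAC} by feeding in the computable $\kappa_{mc}$-name of $X$ and the computable LC function, exactly as you do. Nothing further is needed.
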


We now prove the converse of Theorem \ref{thm:EULAC} holds for computable Euclidean continua.

\begin{theorem}\label{thm:EULACCONVERSE}
Every ULAC function for $X$ is an LC function for $X$.
\end{theorem}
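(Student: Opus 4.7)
The proof looks essentially immediate once we unpack the definitions. The plan is to take the ULAC function $f$ and show that it satisfies the inclusion required of an LC function, namely
\[
X \cap B_{2^{-f(k)}}(p) \subseteq C_p(B_{2^{-k}}(p))
\]
for every $k \in \N$ and every $p \in X$.

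First I would fix $k$ and $p \in X$, then pick an arbitrary $q \in X \cap B_{2^{-f(k)}}(p)$. The case $q = p$ is trivial since $p \in C_p(B_{2^{-k}}(p))$ by definition of the connected component. For the case $q \neq p$, note that $d(p,q) < 2^{-f(k)}$, so the ULAC hypothesis applies and yields an arc $A \subseteq X$ from $p$ to $q$ with $\diam(A) < 2^{-k}$.

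Next, I would observe that since $p \in A$ and $\diam(A) < 2^{-k}$, every point of $A$ lies within distance $2^{-k}$ of $p$; hence $A \subseteq B_{2^{-k}}(p)$, and combining with $A \subseteq X$ gives $A \subseteq X \cap B_{2^{-k}}(p)$. Because arcs are connected and $p \in A$, the arc $A$ is contained in the connected component of $p$ in $X \cap B_{2^{-k}}(p)$, that is, $A \subseteq C_p(B_{2^{-k}}(p))$. Since $q \in A$, we conclude $q \in C_p(B_{2^{-k}}(p))$, as required.

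There is really no obstacle here — the statement is purely about the classical (non-effective) content, and both sides of the implication are stated in terms of the same threshold $2^{-f(k)}$, so no loss or modification of the function is needed. The only subtlety is the distinct-points caveat in the ULAC definition, which is handled by treating $q = p$ separately.
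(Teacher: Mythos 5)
Your proof is correct and follows exactly the paper's own argument: given $q$ within $2^{-f(k)}$ of $p$, the ULAC hypothesis produces an arc of diameter less than $2^{-k}$, which lies in $B_{2^{-k}}(p)\cap X$ and, being connected and containing $p$, lies in $C_p(B_{2^{-k}}(p))$. Your explicit handling of the $q=p$ case is a small tidiness the paper omits, but the approach is the same.
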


\begin{proof}
Let $g$ be a ULAC function for $X$.
Let $n \in \N$, and let $y \in X$.  Let $C = C_y(B_{2^{-n}}(y))$.  
We claim that $X \cap B_{2^{-g(n)}}(y) \subseteq C$.
For, let $z \in B_{2^{-g(n)}}(y) \cap X$.  Then, there is an arc from 
$y$ to $z$ in $X$ of diameter less than $2^{-n}$.  
Hence, this arc is entirely contained in $B_{2^{-n}}(y)$. 
Thus, $z \in C$.
\QED\end{proof}
 
It now follows that if $X$ is a computably compact SEULAC continuum, then $X$ is ELC.

It only remains to show that every computably compact EULAC continuum is SEULAC.  To do so, we will
need the tools in the next section.

\section{Witnessing chains and arc chains}

Theorem \ref{thm:PCARC} is proven by building a descending sequence of simple chains of sets which are bounded, open, and connected.  In this section, we 
set forth a constructive version of the concept of a simple chain.  A first attempt
might be a sequence of names of open sets $U_1, \ldots, U_k$ such that 
$(U_1 \cap X, \ldots, U_k \cap X)$ is a simple chain of connected sets.
There are two difficulties with this definition.  The first is that we cannot determine from these data in finitely many steps whether $U_i \cap X$ is connected.
Nor, can we determine if $X \cap U_i \cap U_j = \emptyset$.  To overcome the first difficulty, we introduce the concept of a witnessing chain.

In this section and the next, we assume $f$ is an LC function for $X$.  We assume $f$ is increasing.  Hence, $2^{-f(k)} \leq 2^{-k}$ for all $k \in \N$.

\begin{definition}\label{def:WIT.CHAIN}
A \emph{witnessing chain} is a sequence $(m, R_1, \ldots, R_k)$ such that 
\begin{itemize}
	\item each $R_i$ is a rational box, 

	\item $R_i \cap R_{i+1} \cap X \neq \emptyset$ whenever 
$1 \leq i < k$, and
	
	\item $\diam(R_i) < 2^{-f(m)}$ whenever $1 \leq i \leq k$.
\end{itemize}
\end{definition} 

\begin{proposition}\label{prop:WIT.CHAIN.1}
Suppose $\omega = (m,R_1, \ldots, R_k)$ is a witnessing chain and that 
$1 \leq i \leq k$.  Then, for all $x,y \in R_i \cap X$, 
\[
C_x(B_{2^{-m}}(R_i)) = C_y(B_{2^{-m}}(R_i)).
\]
\end{proposition}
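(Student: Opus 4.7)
The plan is to exploit the fact that $\diam(R_i) < 2^{-f(m)}$ together with the defining property of the LC function $f$. Fix $x, y \in R_i \cap X$. Since both points lie in $R_i$, we immediately get the estimate $d(x,y) \leq \diam(R_i) < 2^{-f(m)}$, so $y \in B_{2^{-f(m)}}(x)$.

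Applying the LC function property with $k = m$ and $p = x$ yields
\[
X \cap B_{2^{-f(m)}}(x) \subseteq C_x(B_{2^{-m}}(x)),
\]
so $y \in C_x(B_{2^{-m}}(x))$. Next I would note the easy inclusion $B_{2^{-m}}(x) \subseteq B_{2^{-m}}(R_i)$: any point within $2^{-m}$ of $x \in R_i$ is by definition in the $2^{-m}$-neighborhood of $R_i$. Proposition \ref{prop:CONTAINED} then gives
\[
C_x(B_{2^{-m}}(x)) \subseteq C_x(B_{2^{-m}}(R_i)),
\]
so $y \in C_x(B_{2^{-m}}(R_i))$.

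Once $y$ sits in the connected component of $x$ inside $B_{2^{-m}}(R_i) \cap X$, the conclusion $C_y(B_{2^{-m}}(R_i)) = C_x(B_{2^{-m}}(R_i))$ is immediate from the fact that connected components partition the ambient set, so two components either coincide or are disjoint. There is no real obstacle here; the proof is essentially a one-line computation once one notices that the witnessing-chain condition $\diam(R_i) < 2^{-f(m)}$ is calibrated precisely to the hypothesis of the LC function at level $m$, after which Proposition \ref{prop:CONTAINED} does the trivial passage from $B_{2^{-m}}(x)$ to the larger set $B_{2^{-m}}(R_i)$.
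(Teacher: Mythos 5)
Your argument is correct and is essentially identical to the paper's proof: both use $\diam(R_i) < 2^{-f(m)}$ to place $y$ in $B_{2^{-f(m)}}(x)$, apply the LC property of $f$ at level $m$, pass from $B_{2^{-m}}(x)$ to $B_{2^{-m}}(R_i)$ via Proposition \ref{prop:CONTAINED}, and conclude by the fact that components sharing a point coincide. Nothing further is needed.
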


\begin{proof}
Let $U = B_{2^{-m}}(R_i)$.  
Since $\diam(R_i) < 2^{-f(m)}$, it follows that 
$y \in X \cap B_{2^{-f(m)}}(x)$.  Since 
$B_{2^{-m}}(x) \subseteq U$, it follows from Proposition \ref{prop:CONTAINED} that 
\[
C_x(B_{2^{-m}}(x)) \subseteq C_x(U).
\]
Since $f$ is an LC function for $X$, $X \cap B_{2^{-f(m)}}(x) \subseteq C_x(B_{2^{-m}}(x))$. It then follows that 
$y \in C_x(U)$.  So, $C_x(U)$ and $C_y(U)$ are connected components of $U \cap X$ having a point in common, namely $y$.  Hence, they are equal. 
\QED\end{proof}

We now make some notation.
Let $\omega = (m, R_1, \ldots, R_k)$ be a witnessing chain.  We let:
\begin{eqnarray*}
V_\omega & = & \bigcup_{i=1}^k B_{2^{-m}}(R_i)\\
m_\omega & = & m\\
k_\omega & = & k\\
R_{\omega, j} & = & R_j\\
\end{eqnarray*}

We also let 
\[
C_{\omega, i} = C_x(B_{2^{-m_\omega}}(R_{\omega, i}))
\]
for any $x \in R_{\omega, i} \cap X$.   By Proposition \ref{prop:WIT.CHAIN.1}, the choice of $x$ is inconsequential. 
We then let
\[
C_\omega = \bigcup_{j=1}^{k_\omega} C_{\omega,j}.
\] 

The key properties of the sets associated with a witnessing chain are summarized in the following proposition.

\begin{proposition}\label{prop:SETS.WIT.CHAIN}
Let $\omega$ be a witnessing chain.
\begin{enumerate}
	\item $(C_{\omega, 1}, \ldots, C_{\omega, k_\omega})$ is a 
chain of open, arcwise connected subsets of $X$.\label{prop:SETS.WIT.CHAIN.1}

	\item $C_\omega$ is an open and arcwise connected subset of $X$.\label{prop:SETS.WIT.CHAIN.2}

	\item $C_\omega \subseteq V_\omega$.\label{prop:SETS.WIT.CHAIN.3}
\end{enumerate}
\end{proposition}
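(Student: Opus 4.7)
The plan is to establish the three parts in sequence, using Theorems \ref{thm:LC} and \ref{thm:PCARC} together with Propositions \ref{prop:CHAIN.UNION} and \ref{prop:WIT.CHAIN.1}. Throughout, I would rely on the fact that $X$ is a Peano continuum (it is a continuum by assumption, and the existence of an LC function $f$ implies it is locally connected), so the hypotheses of Theorems \ref{thm:LC} and \ref{thm:PCARC} are available.

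For part (\ref{prop:SETS.WIT.CHAIN.1}), fix $i$ with $1 \leq i \leq k_\omega$. By definition, $C_{\omega,i}$ is a connected component of the open set $B_{2^{-m_\omega}}(R_{\omega,i}) \cap X$, so by Theorem \ref{thm:LC} it is open in $X$, and by Theorem \ref{thm:PCARC} it is arcwise connected. For the chain property, suppose $1 \leq i < k_\omega$. Since $\omega$ is a witnessing chain, there exists $x \in R_{\omega,i} \cap R_{\omega,i+1} \cap X$. By Proposition \ref{prop:WIT.CHAIN.1}, since $x \in R_{\omega,i} \cap X$, we may take $C_{\omega,i} = C_x(B_{2^{-m_\omega}}(R_{\omega,i}))$, and since $x \in R_{\omega,i+1} \cap X$, we may take $C_{\omega,i+1} = C_x(B_{2^{-m_\omega}}(R_{\omega,i+1}))$. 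Hence $x \in C_{\omega,i} \cap C_{\omega,i+1}$.

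For part (\ref{prop:SETS.WIT.CHAIN.2}), $C_\omega$ is open as a finite union of open subsets of $X$. By part (\ref{prop:SETS.WIT.CHAIN.1}) and Proposition \ref{prop:CHAIN.UNION}, $C_\omega$ is connected. Being a connected open subset of the Peano continuum $X$, it is arcwise connected by Theorem \ref{thm:PCARC}.

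Part (\ref{prop:SETS.WIT.CHAIN.3}) is immediate: each $C_{\omega,i}$ is a subset of $B_{2^{-m_\omega}}(R_{\omega,i}) \cap X \subseteq V_\omega$, and taking the union over $i$ preserves this inclusion. There is no serious obstacle here; the only subtle point is the chain-overlap in part (\ref{prop:SETS.WIT.CHAIN.1}), and that subtlety is precisely what Proposition \ref{prop:WIT.CHAIN.1} was designed to handle, by ensuring the component $C_{\omega,i}$ is well defined independently of the choice of basepoint in $R_{\omega,i} \cap X$.
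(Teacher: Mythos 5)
Your proposal is correct and follows essentially the same route as the paper: openness of each $C_{\omega,i}$ from Theorem \ref{thm:LC}, arcwise connectivity from Theorem \ref{thm:PCARC}, the chain property via Proposition \ref{prop:WIT.CHAIN.1} applied to a point of $R_{\omega,i} \cap R_{\omega,i+1} \cap X$ (the paper phrases this as $R_{\omega,j} \cap X \subseteq C_{\omega,j}$), and connectivity of the union via Proposition \ref{prop:CHAIN.UNION}. No gaps; your explicit remark that the LC function makes $X$ a Peano continuum is a point the paper leaves implicit.
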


\begin{proof}
That each $C_{\omega, j}$ is open follows from Theorem \ref{thm:LC}.
By definition, they are connected.  So, by Theorem \ref{thm:PCARC}, they are arcwise connected. 

By definition and by Proposition \ref{prop:WIT.CHAIN.1}, 
$R_{\omega, j}\cap X \subseteq C_{\omega, j}$.  Hence, by Definition
\ref{def:WIT.CHAIN}, $(C_{\omega, 1}, \ldots, C_{\omega, k})$
is a chain.  It then follows from Proposition \ref{prop:CHAIN.UNION} that 
$C_\omega$ is an open connected subset of $X$. Hence, again by Theorem \ref{thm:PCARC}, it is arcwise connected.

Part \ref{prop:SETS.WIT.CHAIN.3} follows directly from the definitions.
\QED\end{proof}

A witnessing chain can thus be viewed as an effective version of a chain of sets in that the 
fact that the required intersections are non-empty is explicitly witnessed.  It also provides an effective analog of the concept of an open, connected, and bounded subset of 
$X$ since, although the set $C_\omega$ is perhaps unknown, the set $V_\omega$ explicitly bounds this set.  A sufficient amount of effectivity is demonstrated by Proposition \ref{prop:CE}.  That a sufficiently large set of witnessing chains exists is demonstrated by the following lemma.

\begin{definition}\label{def:WIT.SAME}
Suppose $\omega = (m, R, R_1, \ldots, R_k)$ is a witnessing chain.  If
$x \in R_1 \cap X$, and if $y \in R_k \cap X$, then we say that 
$\omega$ is a \emph{witnessing chain from $x$ to $y$}.
\end{definition}

\begin{lemma}\label{lm:SAME.COMPONENT}\label{lm:GET.STARTED}
Let $U \subseteq \R^n$ be an open set, and suppose $x,y$ are distinct points in the same connected component of $U \cap X$.  Then, there is a witnessing chain
$\omega$ from $x$ to $y$ such that $\overline{V_\omega} \subseteq U$.
\end{lemma}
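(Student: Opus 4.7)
My plan is to produce the witnessing chain by first extracting a concrete arc from $x$ to $y$ inside the given component and then covering that arc by small rational boxes chosen so that a neighborhood of each still sits inside $U$.

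First I would exploit the Peano continuum structure. Let $W$ denote the connected component of $U\cap X$ that contains both $x$ and $y$. Because $f$ is an LC function for $X$ and $X$ is a continuum, $X$ is a Peano continuum; by Theorem \ref{thm:LC} the set $W$ is open in $X$, and then Theorem \ref{thm:PCARC} yields an arc $A\subseteq W$ from $x$ to $y$. Since $A$ is compact and contained in the open set $U$, Proposition \ref{prop:COMPACT} furnishes $\epsilon_0>0$ with $B_{\epsilon_0}(A)\subseteq U$; setting $\epsilon=\epsilon_0/2$ ensures that even the closure $\overline{B_{\epsilon}(A)}$ remains inside $U$.

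Next I would build the chain. Pick $m\in\N$ large enough that $2^{-f(m)}+2^{-m}<\epsilon$, and cover the compact set $A$ by finitely many rational boxes $R'_1,\ldots,R'_N$, each of diameter strictly less than $2^{-f(m)}$ and each meeting $A$. The traces $\{R'_j\cap A\}_{j=1}^N$ form an open cover of the connected space $A$, so Theorem \ref{thm:SIMPLE.CHAIN} produces indices $\alpha_1,\ldots,\alpha_k$ for which $(R'_{\alpha_1}\cap A,\ldots,R'_{\alpha_k}\cap A)$ is a simple chain in $A$ from $x$ to $y$. I then set $\omega=(m,R'_{\alpha_1},\ldots,R'_{\alpha_k})$.

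Finally I would verify the three required properties. Each pairwise intersection $R'_{\alpha_j}\cap R'_{\alpha_{j+1}}$ contains a point of $A\subseteq X$, and each box has diameter smaller than $2^{-f(m)}$, so $\omega$ is a witnessing chain; because $x$ lies in the first link of the simple chain and $y$ in the last, it is a witnessing chain from $x$ to $y$. For the closure bound, each $R'_{\alpha_j}$ meets $A$ and hence lies in $B_{2^{-f(m)}}(A)$, giving $B_{2^{-m}}(R'_{\alpha_j})\subseteq B_{2^{-m}+2^{-f(m)}}(A)\subseteq B_{\epsilon}(A)$; taking the union over $j$ and closing yields $\overline{V_\omega}\subseteq\overline{B_{\epsilon}(A)}\subseteq U$. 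There is no real conceptual obstacle here beyond choosing $\epsilon$ and $m$ carefully; the one place to stay attentive is that the simple chain supplied by Theorem \ref{thm:SIMPLE.CHAIN} lives in $A$ rather than in $X$, so the crucial non-emptiness assertions must be transferred to the ambient space via the inclusion $A\subseteq X$.
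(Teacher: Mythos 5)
Your proposal is correct and follows essentially the same route as the paper: obtain an arc from $x$ to $y$ in the connected component via Theorems \ref{thm:LC} and \ref{thm:PCARC}, use Proposition \ref{prop:COMPACT} to fit a tube around the arc inside $U$, cover the arc by rational boxes of diameter less than $2^{-f(m)}$, and apply Theorem \ref{thm:SIMPLE.CHAIN} to the traces on the arc to extract the witnessing chain. The only cosmetic difference is bookkeeping: the paper enforces $\overline{B_{2^{-m}}(S_p)} \subseteq U$ box by box, whereas you arrange $2^{-m} + 2^{-f(m)} < \epsilon$ globally; both yield $\overline{V_\omega} \subseteq U$.
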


\begin{proof}
It follows from Theorems \ref{thm:LC} and \ref{thm:PCARC} that there is an arc $A$ from $x$ to $y$ which is contained in $U \cap X$.  By Proposition \ref{prop:COMPACT}, there exists 
$m$ such that $\overline{B_{2^{-m}}(A)} \subseteq U$.  For each $p \in A$, there is a rational box $S_p$ such that $p \in S_p$, $\overline{B_{2^{-m}}(S_p)} \subseteq U$, and $\diam(S_p) < 2^{-f(m)}$.  By Theorem \ref{thm:SIMPLE.CHAIN}, there exist $p_1, \ldots, p_t \in A$ such that $(A \cap S_{p_1}, \ldots, A \cap S_{p_t})$ is a simple chain from $x$ to $y$.  Set $\omega = (m, S_{p_1}, \ldots, S_{p_t})$. 
It follows that $\omega$ is a witnessing chain from $x$ to $y$.
\QED\end{proof}

\begin{definition}\label{def:ARC.CHAIN}
An \emph{arc chain} is a sequence of the form 
$(\omega_1, \ldots, \omega_l)$ where
\begin{itemize}
	\item $\omega_j$ is a witnessing chain,
	
	\item $(V_{\omega_1}, \ldots, V_{\omega_l})$ is a simple chain, and
	
	\item $X \cap R_{\omega_i, k_{\omega_i}} \cap R_{\omega_{i+1}, 1} \neq \emptyset$ whenever $1 \leq i < l$.
\end{itemize}
\end{definition}

We make some notation.  Let $\mathfrak{p} = (\omega_1, \ldots, \omega_l)$ be 
an arc chain.  We let:
\begin{eqnarray*}
l_{\mathfrak{p}} & = & l\\
\omega_{\mathfrak{p}, j} & =& \omega_j\\
V_{\mathfrak{p}, j} & = & V_{\omega_{\mathfrak{p}, j}}\\
C_{\mathfrak{p}, j} & = & C_{\omega_j}\\
C_{\mathfrak{p}} & = & \bigcup_j C_{\mathfrak{p}, j}
\end{eqnarray*}

\begin{proposition}\label{prop:SETS.ARC.CHAIN}
Suppose $\mathfrak{p}$ is an arc chain.
\begin{enumerate}
	\item $(C_{\mathfrak{p}, 1}, \ldots, C_{\mathfrak{p}, l_{\mathfrak{p}}})$
is a simple chain of open, arcwise connected subsets of $X$.

	\item $C_\mathfrak{p}$ is an open, arcwise connected 
subset of $X$.
\end{enumerate}
\end{proposition}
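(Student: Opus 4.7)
The plan is to derive both statements directly from Proposition \ref{prop:SETS.WIT.CHAIN} and the structural conditions built into the definition of an arc chain, so that only bookkeeping remains.

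First I would handle the simple chain property in part (1). Openness and arcwise connectivity of each $C_{\mathfrak{p},j} = C_{\omega_j}$ are immediate from Proposition \ref{prop:SETS.WIT.CHAIN}(\ref{prop:SETS.WIT.CHAIN.2}), so only the intersection pattern needs checking. For consecutive indices, the third bullet of Definition \ref{def:ARC.CHAIN} supplies some
\[
x \in X \cap R_{\omega_i, k_{\omega_i}} \cap R_{\omega_{i+1}, 1}.
\]
Because $x \in R_{\omega_i, k_{\omega_i}} \cap X \subseteq B_{2^{-m_{\omega_i}}}(R_{\omega_i, k_{\omega_i}}) \cap X$, the point $x$ lies in its own component inside that intersection, so $x \in C_{\omega_i, k_{\omega_i}} \subseteq C_{\omega_i} = C_{\mathfrak{p},i}$; the symmetric argument gives $x \in C_{\mathfrak{p},i+1}$. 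For non-consecutive indices I would invoke Proposition \ref{prop:SETS.WIT.CHAIN}(\ref{prop:SETS.WIT.CHAIN.3}): each $C_{\mathfrak{p},j}$ is contained in $V_{\omega_j}$, and the second bullet of Definition \ref{def:ARC.CHAIN} says $(V_{\omega_1},\ldots,V_{\omega_l})$ is a simple chain, so $V_{\omega_i} \cap V_{\omega_j} = \emptyset$ whenever $|i-j|\geq 2$, forcing $C_{\mathfrak{p},i} \cap C_{\mathfrak{p},j} = \emptyset$.

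For part (2), $C_\mathfrak{p}$ is open because it is a union of open subsets of $X$. Part (1) shows that $(C_{\mathfrak{p},1},\ldots,C_{\mathfrak{p},l_{\mathfrak{p}}})$ is in particular a chain of connected sets, so Proposition \ref{prop:CHAIN.UNION} gives that $C_\mathfrak{p}$ is connected. Then, since the standing hypothesis that $X$ is a continuum equipped with an LC function makes $X$ a Peano continuum, Theorem \ref{thm:PCARC} applies and upgrades connectedness of the open subset $C_\mathfrak{p}$ to arcwise connectedness.

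I do not expect a serious obstacle; the only subtlety is remembering that $C_{\omega,i}$ is defined with respect to any point of $R_{\omega,i}\cap X$ (so that the witnessing point $x$ above automatically belongs to $C_{\omega_i,k_{\omega_i}}$ and to $C_{\omega_{i+1},1}$), and that the containment $C_{\mathfrak{p},j} \subseteq V_{\omega_j}$ is what lets the simple chain property transfer from the $V_{\omega_j}$'s to the $C_{\mathfrak{p},j}$'s. Once these points are noted, both parts follow by short appeals to the previously proved results.
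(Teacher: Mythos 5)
Your proof is correct and follows essentially the same route as the paper: the third condition of Definition \ref{def:ARC.CHAIN} together with $R_{\omega,j}\cap X\subseteq C_{\omega,j}$ gives the chain property, the containments $C_{\mathfrak{p},j}\subseteq V_{\mathfrak{p},j}$ plus the simple chain condition on the $V$'s give disjointness for non-consecutive indices, and Proposition \ref{prop:CHAIN.UNION}, Theorem \ref{thm:LC}, and Theorem \ref{thm:PCARC} finish part (2). You merely spell out the witnessing-point bookkeeping in slightly more detail than the paper does.
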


\begin{proof}
The third condition of Definition \ref{def:ARC.CHAIN} ensures that 
$(C_{\mathfrak{p}, 1}, \ldots, C_{\mathfrak{p}, l_{\mathfrak{p}}})$
is a chain.  By Proposition \ref{prop:SETS.WIT.CHAIN}, 
$C_{\mathfrak{p}, j} \subseteq V_{\mathfrak{p}, j}$ for each $j$.
So, the second condition of Definition \ref{def:ARC.CHAIN}
ensures that this is a simple chain.

It now follows from Theorem \ref{thm:LC} and Proposition \ref{prop:CHAIN.UNION} that 
$C_\mathfrak{p}$ is an open connected set.  Hence, 
by Theorem \ref{thm:PCARC}, $C_\mathfrak{p}$ is an
arcwise connected subset of $X$.
\QED\end{proof}

An arc chain can thus be viewed as a constructive version of a simple chain of open connected sets in that it contains explicit witnesses that the intersections which are required to be non-empty are non-empty, and, via the $V$-sets, witnesses that the intersections which are required to be empty are empty.

\begin{definition}\label{def:ACREFINES}\label{def:GOES.THROUGH}
We say that a simple chain $(U_1, \ldots, U_k)$ \emph{narrowly goes straight through}
a simple chain $(V_1, \ldots, V_l)$ if 
there are integers
\[
0 = t_0 < t_1 < t_2 < \ldots < t_{l-1} < t_l = k
\]
such that $\overline{U_j} \subseteq V_i$ whenever $t_{i-1} + 1 \leq j \leq t_i$.
If $\mathfrak{p}_0$ and $\mathfrak{p}_1$ are arc chains, then we say that $\mathfrak{p}_0$ \emph{narrowly goes straight through} $\mathfrak{p}_1$ if 
$(V_{\mathfrak{p}_0, 1}, \ldots, V_{\mathfrak{p}_0, l_{\mathfrak{p}_0}})$
narrowly goes straight through $(V_{\mathfrak{p}_1, 1}, \ldots, V_{\mathfrak{p}_1, l_{\mathfrak{p}_1}})$.
\end{definition}

\begin{proposition}\label{prop:CE}
It is possible to compute, from names of $f$ and $X$, enumerations of the following.
\begin{enumerate}
	\item The set of witnessing chains.\label{prop:CE.1}

	\item The set of arc chains.\label{prop:CE.2}

	\item The set of all pairs of arc chains $(\mathfrak{p}_0, \mathfrak{p}_1)$
such that $\mathfrak{p}_0$ narrowly goes straight through 
$\mathfrak{p}_1$.\label{prop:CE.3}
\end{enumerate}
\end{proposition}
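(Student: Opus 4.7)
The plan is to present each of the three sets as a set of finite rational-data tuples defined by a conjunction of conditions each of which is uniformly decidable or c.e.\ in names of $f$ and $X$; the enumeration is then obtained by dovetailing over candidates. The key semi-decidable test I will use throughout is the following: for every rational box $R$, the predicate ``$R \cap X \neq \emptyset$'' is c.e.\ in the $\kappa_{mc}$-name of $X$. Indeed, any $p \in R \cap X$ lies in some arbitrarily small rational box $S \subseteq R$; since the $\kappa_{mc}$-name lists \emph{all} minimal covers of $X$ by rational boxes, some listed cover will eventually contain such an $S$, and conversely the appearance of any rational box $S \subseteq R$ inside a listed minimal cover witnesses the intersection.

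For part (\ref{prop:CE.1}) I would dovetail over all candidate tuples $(m, R_1, \ldots, R_k)$ with $m \in \N$ and each $R_i$ a rational box. After computing $f(m)$ from the name of $f$, the diameter condition $\diam(R_i) < 2^{-f(m)}$ is a decidable rational comparison once one works with $\diam(R_i)^2$. For each $i < k$, the set $R_i \cap R_{i+1}$ is itself a rational box (or empty, in which case the candidate is rejected), and the condition $R_i \cap R_{i+1} \cap X \neq \emptyset$ is c.e.\ by the observation above. Output a tuple once all its required checks have succeeded.

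For part (\ref{prop:CE.2}) I would enumerate candidate sequences $(\omega_1, \ldots, \omega_l)$ of witnessing chains from the enumeration produced in (\ref{prop:CE.1}); the terminal-box condition $X \cap R_{\omega_i, k_{\omega_i}} \cap R_{\omega_{i+1}, 1} \neq \emptyset$ is c.e.\ by the same argument. The step works because the simple chain condition on $(V_{\omega_1}, \ldots, V_{\omega_l})$ is \emph{decidable}: each $V_{\omega_i}$ is a finite union of Minkowski sums $B_{2^{-m_i}}(R)$ of a rational ball and a rational box, and two such sums $B_r(R)$ and $B_{r'}(R')$ intersect iff $d(R,R') < r+r'$, equivalently $d(R,R')^2 < (r+r')^2$. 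Since the squared distance between two axis-aligned rational boxes is an explicit rational function of their corner coordinates, both the required intersections (for $|i-j|=1$) and the required disjointness (for $|i-j| \geq 2$) reduce to decidable rational comparisons; dovetailing then yields the enumeration.

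For part (\ref{prop:CE.3}), given a candidate pair $(\mathfrak{p}_0, \mathfrak{p}_1)$ from the enumeration in (\ref{prop:CE.2}), there are only finitely many candidate splittings $0 = t_0 < \cdots < t_l = k$ to try, so it suffices to show that each containment $\overline{V_{\mathfrak{p}_0, j}} \subseteq V_{\mathfrak{p}_1, i}$ is decidable. Each such containment, after replacing Euclidean distances by their squares to clear radicals, is a sentence in the first-order theory of $(\R, +, \cdot, \leq, 0, 1)$ with rational parameters, and hence decidable by Tarski's theorem; concretely, one can compute the infimum of $d(y, V_{\mathfrak{p}_1, i}^c)^2$ over $y \in \overline{V_{\mathfrak{p}_0, j}}$ as the solution of a finite collection of piecewise quadratic optimization problems over rational data and test whether it is positive. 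The main obstacle of the whole proposition is really this final decidability step; once it is in hand, the rest is routine bookkeeping over the c.e.\ and decidable tests already described.
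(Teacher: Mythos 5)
Your proposal is correct and follows essentially the same route as the paper: dovetail over candidate tuples, semi-decide ``$R \cap X \neq \emptyset$'' from the $\kappa_{mc}$-name via small rational boxes appearing inside listed minimal covers (the paper phrases this as extracting the list of all rational boxes meeting $X$ and searching for witness boxes $S_i$ inside the required intersections), and treat the remaining conditions --- diameters, the simple-chain property of the $V$-sets, and the closure containments in ``narrowly goes straight through'' --- as exactly checkable from the rational data. The only difference is that you explicitly justify the decidability of those geometric predicates (squared distances between rational boxes, Tarski/optimization for the containments), which the paper asserts without proof.
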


\begin{proof}
Fix a name for $X$.  From this name we can computably extract a list of all rational boxes 
which intersect $X$.

We can then compute an enumeration of the set of all witnessing chains as follows.
List a tuple $(m, R_1, \ldots, R_k)$ whenever we find rational boxes
$S_1, \ldots,$\\ $ S_{k-1}$ such that 
\begin{itemize}
	\item $\emptyset \neq S_i \cap X$ whenever $1 \leq i < k$, 

	\item $S_i \subseteq R_i \cap R_{i+1}$ whenever $1 \leq i < k$, 
\end{itemize} 
and it is also the case that $\diam(R_i) < 2^{-f(m)}$ whenever
$1 \leq i \leq k$.

We can now compute an enumeration of the set of all arc chains as follows.  List a tuple
$(\omega_1, \ldots, \omega_l)$ whenever $\omega_1, \ldots, \omega_l$ have
all been listed as witnessing chains, $(V_{\omega_1}, \ldots, V_{\omega_l})$
is a simple chain, and we have found rational boxes $S_1, \ldots, S_{l-1}$
such that $S_i \cap X \neq \emptyset$ and $S_i \subseteq R_{\omega_i, k_{\omega_i}} \cap R_{\omega_{i+1}, 1}$ whenever $1 \leq i < l$.

The relation ``narrowly goes through" is computable.  So, it now follows that we can compute, uniformly in the given data, a enumeration of the set of all pairs of arc chains $(\mathfrak{p}_0, \mathfrak{p}_1)$
such that $\mathfrak{p}_0$ narrowly goes straight through 
$\mathfrak{p}_1$
\QED\end{proof}

\begin{definition}\label{def:DIAM}
If $\mathfrak{p}$ is an arc chain, then the \emph{diameter} of $\mathfrak{p}$ is 
defined to be the diameter of the simple chain 
$(V_{\mathfrak{p}, 1}, \ldots, V_{\mathfrak{p}, l_{\mathfrak{p}}})$.
\end{definition}

\begin{definition}\label{def:FROM}
If $\mathfrak{p}$ is an arc chain such that
$(V_{\mathfrak{p}, 1}, \ldots, V_{\mathfrak{p}, l_\mathfrak{p}})$ 
is a simple chain from $x$ to $y$, 
then we say that $\mathfrak{p}$ is an \emph{arc chain from $x$ to $y$}.
\end{definition}

\begin{theorem}\label{thm:REFINES}
Suppose $\mathfrak{p}$ is an arc chain from $x$ to $y$.  
Then, for every $\epsilon > 0$, there is an arc chain from $x$ to $y$ 
of diameter less than $\epsilon$ that narrowly goes straight through $\mathfrak{p}$.
\end{theorem}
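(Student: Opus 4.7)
My plan is to replace each witnessing chain $\omega_i$ in $\mathfrak{p}$ by a long sub-chain of small witnessing chains, each built around a short sub-arc that lies inside $C_{\mathfrak{p},i}$. First, for each $1 \leq i < l$, Definition \ref{def:ARC.CHAIN}(3) provides a point $z_i \in X \cap R_{\omega_i, k_{\omega_i}} \cap R_{\omega_{i+1},1}$; by Proposition \ref{prop:WIT.CHAIN.1} and the definition of the $C_{\omega,j}$'s we have $z_i \in C_{\mathfrak{p},i} \cap C_{\mathfrak{p},i+1}$. Set $z_0 := x$ and $z_l := y$, reading ``arc chain from $x$ to $y$'' (Definition \ref{def:FROM}) to entail $x \in C_{\mathfrak{p},1}$ and $y \in C_{\mathfrak{p},l}$. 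Since each $C_{\mathfrak{p},i}$ is open and arcwise connected (Proposition \ref{prop:SETS.WIT.CHAIN}), I pick an arc $A_i \subseteq C_{\mathfrak{p},i}$ from $z_{i-1}$ to $z_i$. Arcs in non-adjacent groups are automatically disjoint because $V_{\mathfrak{p},i} \cap V_{\mathfrak{p},j} = \emptyset$ for $|i-j| \geq 2$; for adjacent groups I trim $A_i$ and $A_{i+1}$ iteratively along their intersection to arrange $A_i \cap A_{i+1} = \{z_i\}$ (this is the technical crux).

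Next I subdivide and pad. Parametrizing $A_i$ by $f_i : [0,1] \to A_i$ and using uniform continuity, I subdivide $A_i$ into sub-arcs $A_i^j := f_i([(j-1)/N_i, j/N_i])$ with endpoints $w_{i,j} := f_i(j/N_i)$ and $\diam(A_i^j) < \epsilon/3$. By injectivity of the $f_i$ and the arc disjointness arranged above, the sub-arcs that are non-consecutive in the planned concatenation order $A_1^1,\dots,A_1^{N_1},A_2^1,\dots,A_l^{N_l}$ form pairs of disjoint compact sets. Applying Proposition \ref{prop:COMPACT} to each such disjoint pair and to each containment $A_i^j \subseteq V_{\mathfrak{p},i}$, I obtain a single $\delta < \epsilon/3$ such that (a) $\overline{B_\delta(A_i^j)} \subseteq V_{\mathfrak{p},i}$ for every $i,j$, and (b) $B_\delta(A_i^j) \cap B_\delta(A_{i'}^{j'}) = \emptyset$ for every non-consecutive pair.

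Finally I invoke Lemma \ref{lm:GET.STARTED}: for each sub-arc $A_i^j$ the endpoints $w_{i,j-1}$ and $w_{i,j}$ lie in the connected component of $B_\delta(A_i^j) \cap X$ containing $A_i^j$, so the lemma yields a witnessing chain $\sigma^{i,j}$ from $w_{i,j-1}$ to $w_{i,j}$ with $\overline{V_{\sigma^{i,j}}} \subseteq B_\delta(A_i^j)$. Concatenating these in the planned order produces $\mathfrak{q}$. Then $\diam(V_{\sigma^{i,j}}) \leq \diam(A_i^j) + 2\delta < \epsilon$; consecutive $V$-sets share the relevant waypoint while non-consecutive ones are disjoint by (b), so $(V_{\sigma^{1,1}},\dots,V_{\sigma^{l,N_l}})$ is a simple chain; the waypoint $w_{i,j} \in X$ lies in the last $R$-box of $\sigma^{i,j}$ and the first of $\sigma^{i,j+1}$, verifying Definition \ref{def:ARC.CHAIN}(3); taking $t_i := N_1 + \cdots + N_i$ together with (a) shows $\mathfrak{q}$ narrowly goes straight through $\mathfrak{p}$; and because $x,y$ lie in only the first and last $V$-sets (again by (b)), $\mathfrak{q}$ is an arc chain from $x$ to $y$.

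The main obstacle is the arc-disjointness arrangement $A_i \cap A_{i+1} = \{z_i\}$: without it, no single $\delta$ can separate all the non-consecutive $B_\delta$-neighborhoods simultaneously, so the $V$-sets of $\mathfrak{q}$ would fail to form a simple chain. Everything else is careful bookkeeping with the definitions.
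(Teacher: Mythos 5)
Your overall plan coincides with the paper's proof: waypoints in consecutive components, arcs inside each $C_{\mathfrak{p},i}$, subdivision into subarcs of diameter less than $\epsilon/3$, a single small $\delta$, and Lemma \ref{lm:GET.STARTED} applied to each $\delta$-neighborhood. The gap is exactly the step you flag as the crux, and in the form you state it the step is in general impossible: you cannot always trim $A_i$ and $A_{i+1}$ so that they meet precisely in the \emph{original} point $z_i$. Concretely, let $X$ be the triod $([-1,2]\times\{0\}) \cup (\{1\}\times[0,1])$, $x=(-1,0)$, $y=(1,1)$, and take an arc chain of length $2$ whose witnessing point for condition (3) of Definition \ref{def:ARC.CHAIN} is $z_1=(3/2,0)$, a point past the branch point $(1,0)$; nothing in Definitions \ref{def:WIT.CHAIN}--\ref{def:FROM} prevents this. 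Then the only arc in $X$ from $x$ to $z_1$ is $[-1,3/2]\times\{0\}$ and the only arc from $z_1$ to $y$ runs back through $(1,0)$, so $A_1\cap A_2$ is a nondegenerate segment; and since the subarc of an arc between two prescribed points is unique, any ``trimming'' that keeps $x$, $y$, and the shared point $z_1$ leaves $A_1$ and $A_2$ unchanged. The repair is to let the connection point move: as in the paper, take the new waypoint to be the \emph{first} point of $A_i$ (traversed from its initial endpoint) that lies on $A_{i+1}$, and pass to the corresponding subarcs; this yields $A_i\cap A_{i+1}$ equal to a single (new) point, and it costs nothing downstream, because condition (3) for the refined chain is witnessed by the first/last boxes of the witnessing chains that Lemma \ref{lm:GET.STARTED} produces ``from $w$ to $w'$'', not by the original boxes of $\mathfrak{p}$.

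A second, smaller omission: your condition (b) separates only non-consecutive $\delta$-neighborhoods, so it does not show that $x$ meets only the first $V$-set of $\mathfrak{q}$; a priori $x$ could lie in $B_\delta(A_1^2)$, which is consecutive to $B_\delta(A_1^1)$, and then $(V_{\sigma^{1,1}},\dots,V_{\sigma^{l,N_l}})$ would not be a simple chain \emph{from $x$ to $y$} in the sense of Definition \ref{def:SCFROM}. The paper adds the explicit requirement that $\delta$ be smaller than the distance from $x$ to every subarc other than the first and from $y$ to every subarc other than the last; you need the same clause. With the moving waypoints and this extra condition on $\delta$, the rest of your bookkeeping (diameter bound $\diam(A_i^j)+2\delta<\epsilon$, the simple-chain verification, and the choice $t_i=N_1+\cdots+N_i$ for ``narrowly goes straight through'') matches the paper and is fine.
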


\begin{proof}
Let $\omega_i = \omega_{\mathfrak{p}, i}$.  Let $l = l_\mathfrak{p}$.

We construct our refinement of $\mathfrak{p}$ as follows.  
Let $x'_j \in C_{\omega_j} \cap C_{\omega_{j+1}}$ when $1 \leq j < l$.  
Set $x_0' = x$ and $x_l' = y$.  Since $\mathfrak{p}$ is an arc chain from $x$ to $y$, it follows that 
$x_0' \neq x_1'$ and that $x_{l-1}' \neq x_l'$.  Since $(C_{\omega_1}, \ldots, C_{\omega_l})$ is a simple chain, it follows that $x_{j-1}' \neq x_j'$ whenever $1 < j < l$.  Hence, by Proposition \ref{prop:SETS.ARC.CHAIN}, for each $j \in \{1, \ldots, l\}$, there is an arc $B_j \subseteq C_{\omega_j}$ from $x_{j-1}'$ to $x_j'$.

When $1 \leq j < l$, let $x_j$ be the first point on $B_j$ that lies in $B_{j+1}$.  \emph{i.e.}, every point on $B_j$ between $x'_{j-1}$ and $x_j$ lies in $B_j - B_{j+1}$, and $x_j \in B_j \cap B_{j+1}$.  Let $x_0 = x_0'$, and let $x_l = x_l'$.  It follows that $x_{j-1} \neq x_j$ whenever $1 \leq j \leq l$.  So, let 
$A_j$ be the subarc of $B_j$ from $x_{j-1}$ to $x_j$ whenever $1 \leq j \leq l$.  It follows that $A_j \cap A_{j+1} = \{x_j \}$ and that $A_i \cap A_j = \emptyset$ whenever $|i -j | > 1$.

We now divide each $A_j$ into two or more subarcs $A_{j,1}, \ldots, A_{j, s_j}$ of diameter
less than $\epsilon/3$.  In addition, we label these arcs so that $A_{j,i} \cap A_{j, i'} \neq \emptyset$ precisely when $|i - i'| \leq 1$.  
Let $x_{j,0} = x_{j-1}$ and $x_{j, s_j} = x_j$.  Let $x_{j,i}$ denote the common 
endpoint of $A_{j,i}$ and $A_{j,i+1}$ when $1 \leq i < s_j$.  
Choose a positive number $\delta$ such that 
$B_\delta(A_{j,i}) \cap B_\delta(A_{j',i'}) = \emptyset$ whenever 
$A_{j,i} \cap A_{j',i'} = \emptyset$.  In addition, choose $\delta$ small enough so
that $x \not \in B_\delta(A_{j,i})$ whenever $(j,i) \neq (1,1)$ and $y \not \in B_\delta(A_{j,i})$ whenever $(j,i) \neq (l,s_l)$. In addition, choose $\delta$ small enough so that $\delta < \epsilon /3$ and so that $\overline{B_\delta(A_{j,i})} \subseteq V_{\omega_j}$ for all $j,i$.  The latter is possible by Proposition \ref{prop:COMPACT}.  It follows that 
$(B_\delta(A_{1,1}), \ldots, B_\delta(A_{l,s_l}))$ is a simple chain from $x$ to $y$ whose diameter is smaller than $\epsilon$.  
 
 We now apply Lemma \ref{lm:GET.STARTED} to $x_{j,i-1}$, $x_{j,i}$, and $B_\delta(A_{j,i})$ and obtain a witnessing chain $\omega_{j,i}$ from $x_{j,i-1}$ to $x_{j,i}$ such 
that 
\[
\overline{V_{\omega_{j,i}}} \subseteq B_\delta(A_{j,i}) \subseteq V_{\omega_j}.
\] 

It follows that $(V_{\omega_{1,1}}, \ldots, V_{\omega_{l,s_l}})$ is a simple 
chain whose diameter is smaller than $\epsilon$ and that 
narrowly goes straight through $(V_{\omega_1}, \ldots, V_{\omega_l})$.
Note that when $V_{\omega_{j_1, i_1}}$ is an immediate predecessor of 
 $V_{\omega_{j_2, i_2}}$ in this sequence, $x_{j_1, i_1}$ is in 
\[
R_{\omega_{j_1, i_1}, k_{\omega_{j_1, i_1}}} \cap R_{\omega_{j_2, i_2}, 1} \cap X.
\]
Hence, $(\omega_{1,1}, \ldots, \omega_{l,s_l})$ is an arc chain.

Since $x \not \in B_\delta(A_{j,i})$ when $(j,i) \neq (1,1)$, it follows that
\[
x \in V_{\omega_{1, 1}} - \bigcup_{(i,j) \neq (1,1)} V_{\omega_{i,j}}.
\]
It similarly follows that 
\[
y \in V_{\omega_l, s_l} - \bigcup_{(i,j) \neq (l, s_l)} V_{\omega_{i,j}}.
\]
Hence, $(\omega_{1,1}, \ldots, \omega_{l,s_l})$ is an arc chain from $x$ to $y$.
\QED\end{proof}

The proof of the following is a minor modification of the proof of Theorem 3.5 of \cite{Hocking.Young.1961}.

\begin{lemma}\label{lm:OPEN}
Suppose $U \subseteq \R^n$ is open and connected, and let $p,q$ be distinct rational points in $U$.  Then, there is a rational polygonal curve $h$ such that 
$h(0) = p$, $h(1) = q$, and $\ran(h) \subseteq U$.
\end{lemma}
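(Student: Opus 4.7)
The plan is a standard two-step approach: first establish polygonal connectivity in $U$ with arbitrary (real-coordinate) vertices via a connectedness argument, then refine to rational vertices by a perturbation.

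For the first step, let $V$ be the set of points in $U$ that can be joined to $p$ by a polygonal path lying in $U$. I would show $V$ is both open and closed in $U$. For openness, if $x \in V$ and $B_\epsilon(x) \subseteq U$, then any $y \in B_\epsilon(x)$ is reachable from $p$ by appending the segment from $x$ to $y$ (which lies in the convex ball $B_\epsilon(x) \subseteq U$) to an existing polygonal path from $p$ to $x$. For closedness in $U$, if $x \in U$ is a limit of points in $V$, choose $\epsilon > 0$ with $B_\epsilon(x) \subseteq U$; any $x_n \in V$ lying in $B_\epsilon(x)$ yields a polygonal path from $p$ to $x$ by appending the segment from $x_n$ to $x$. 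Since $V$ is non-empty (it contains $p$) and is clopen in the connected set $U$, we get $V = U$, so $q \in V$ and there is a polygonal path $\gamma$ from $p$ to $q$ in $U$ with vertices $p = v_0, v_1, \ldots, v_{k-1}, v_k = q$.

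For the second step, I perturb the intermediate vertices to rational points. Since $\gamma$ is compact and contained in the open set $U$, Proposition \ref{prop:COMPACT} yields $\delta > 0$ with $B_\delta(\gamma) \subseteq U$. Choose rational points $r_i$ with $d(r_i, v_i) < \delta$ for $1 \leq i \leq k-1$, and set $r_0 = p$, $r_k = q$, both of which are already rational. For each $i$ and each $t \in [0,1]$, the convex combination estimate
\[
d\bigl(t r_i + (1-t) r_{i+1},\, t v_i + (1-t) v_{i+1}\bigr) \leq t \cdot d(r_i, v_i) + (1-t) \cdot d(r_{i+1}, v_{i+1}) < \delta
\]
shows that every point on each new segment lies within $\delta$ of $\gamma$, hence in $U$. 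Parametrizing the resulting rational polygonal path by rational breakpoints $0 = t_0 < t_1 < \cdots < t_k = 1$ then yields the required curve $h$.

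The only genuine wrinkle is ensuring that each replacement segment $[r_i, r_{i+1}]$ stays inside $U$, and this is handled uniformly by the single $\delta$ extracted from Proposition \ref{prop:COMPACT}, so no piecewise case analysis is required. The clopen-in-connected argument of the first step is standard topology, so the remaining details are routine.
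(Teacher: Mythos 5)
Your proof is correct, but it is not the route the paper has in mind: the paper gives no proof at all, only the remark that the lemma is a minor modification of Theorem 3-5 of Hocking and Young, whose argument runs through the simple chain theorem (Theorem \ref{thm:SIMPLE.CHAIN} here). That proof covers $U$ by open balls (or rational boxes) whose closures lie in $U$, extracts a simple chain of these convex sets from $p$ to $q$, and joins points chosen from the consecutive (nonempty, open) intersections by straight segments, which stay in $U$ by convexity; rationality of the vertices comes for free because each intersection is a nonempty open set and hence contains a rational point, and the ``minor modification'' is just to make these choices rational. Your route instead proves polygonal connectivity by the standard clopen argument (the set $V$ of points polygonally reachable from $p$ is open and closed in the connected set $U$) and then rationalizes the vertices by a perturbation, using Proposition \ref{prop:COMPACT} to get a uniform $\delta$ with $B_\delta(\gamma) \subseteq U$ and the convexity estimate to keep each perturbed segment within $\delta$ of the original path. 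Both are complete: the chain argument buys a proof that fits the machinery already developed in the paper and needs no perturbation step, while yours is more self-contained, avoiding Theorem \ref{thm:SIMPLE.CHAIN} altogether, and is perfectly adequate here since the lemma asks only for a rational polygonal curve, not an arc, so any self-intersections introduced by the perturbation are harmless.
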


\begin{theorem}\label{thm:PARAM1}
It is possible to uniformly compute, from names of $X$, $f$, distinct $x,y \in X$, and a witnessing chain $\omega$ from $x$ to $y$, a name of a parametrization of an arc $A \subseteq X \cap V_\omega$ from $x$ to $y$.
\end{theorem}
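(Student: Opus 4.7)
My plan is to realize the sought parametrization as the uniform limit of a computable sequence $\{F_t\}_{t \in \N}$ of rational polygonal curves, i.e.\ a $\delta_C$-name. The backbone is a nested sequence of arc chains $\mathfrak{p}_0, \mathfrak{p}_1, \ldots$ from $x$ to $y$ with geometrically shrinking diameter. I initialize with $\mathfrak{p}_0 := (\omega)$, which is itself an arc chain from $x$ to $y$ since for length one the conditions in Definition \ref{def:ARC.CHAIN} are vacuous and $x \in R_{\omega,1}$, $y \in R_{\omega, k_\omega}$ both lie in $V_\omega$. Given $\mathfrak{p}_t$, I use Proposition \ref{prop:CE} to enumerate all arc chains that narrowly go straight through $\mathfrak{p}_t$ and select the first one whose diameter is less than $2^{-(t+2)}$, whose narrow-refinement cutoffs $0 = u_0 < u_1 < \cdots < u_{l_t} = l_{t+1}$ satisfy $u_j - u_{j-1} \geq 2$ for every $j$, and whose first and last rational boxes contain $x$ and $y$ respectively. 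The first two conditions are decidable, the box-containment conditions are semidecidable from the names of $x$ and $y$, and existence of such a refinement is guaranteed by the construction used to prove Theorem \ref{thm:REFINES}.

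For each $\mathfrak{p}_t = (\omega_{t,1}, \ldots, \omega_{t, l_t})$ I form $F_t$ by concatenating the rational-box sequences of the $\omega_{t,j}$, choosing a rational point in each nonempty rational intersection of consecutive boxes as an intermediate vertex, and taking rational approximations $x_t \approx x$, $y_t \approx y$ (with error at most $2^{-(t+3)}$) as the initial and final vertices. I parametrize $F_t$ over $[0,1]$ by breakpoints $0 = \sigma_{t,0} < \cdots < \sigma_{t, l_t} = 1$ chosen consistently across levels via $\sigma_{t+1, u_j} := \sigma_{t, j}$ (with uniform interpolation within each sub-interval and within each $\omega_{t,j}$). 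On $[\sigma_{t, j-1}, \sigma_{t, j}]$, both $F_t$ and, by the narrow-refinement property, $F_{t+1}$ take values in $V_{\mathfrak{p}_t, j}$, whose diameter is less than $2^{-(t+2)}$. Hence $d_{\max}(F_t, F_{t+1}) < 2^{-(t+1)}$, and $\{F_t\}$ is a $\delta_C$-name; every step is uniformly computable from the given inputs.

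The limit $f$ is continuous with $f(0) = x$, $f(1) = y$, and image inside $\overline{V_{\mathfrak{p}_1}} \subseteq V_\omega$ (by narrow refinement of $\mathfrak{p}_0 = (\omega)$); moreover, each $V_{\mathfrak{p}_t, j}$ meets the closed set $X$ with diameter tending to zero, so $f([0,1]) \subseteq X \cap V_\omega$. The main obstacle is showing that $f$ is injective, so that its image is a bona fide arc rather than a mere Peano continuum. Here the imposed condition $u_j - u_{j-1} \geq 2$ forces every sub-interval to be further subdivided at the next level, so the mesh of the breakpoints tends to zero. Consequently, for distinct $s, s' \in [0,1]$, at some level $t$ the indices $J_t(s), J_t(s')$ of the containing sub-intervals satisfy $|J_t(s) - J_t(s')| \geq 2$; the simple-chain property of $\mathfrak{p}_t$ then makes $V_{\mathfrak{p}_t, J_t(s)}$ and $V_{\mathfrak{p}_t, J_t(s')}$ disjoint, and applying one more level of narrow refinement converts disjointness into disjoint closures, yielding $f(s) \neq f(s')$. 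Hence $f$ parametrizes an arc from $x$ to $y$ contained in $X \cap V_\omega$, completing the construction.
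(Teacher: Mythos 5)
Your construction is correct, and it shares the paper's backbone: starting from $\mathfrak{p}_0=(\omega)$ you use Theorem \ref{thm:REFINES} together with Proposition \ref{prop:CE} to compute a nested sequence of arc chains of geometrically shrinking diameter, with the same cutoff-gap ($\geq 2$) and endpoint-box conditions the paper imposes, and your injectivity argument (index separation, the simple-chain disjointness of the $V$-sets, upgraded to disjoint closures by one further narrow refinement) is exactly the mechanism the paper uses. Where you genuinely differ is in how the name of the parametrization is produced. The paper defines $h(t)$ abstractly as the unique point of the nested intersection $S_t=\bigcap_j S_{t,j}$ over a nested family of intervals $I_{j,k}$, and then argues computability separately by enumerating, from a name of $t$, rational boxes containing $h(t)$ (type conversion then yields the name). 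You instead output a $\delta_C$-name directly, threading a rational polygonal curve through the rational boxes of each arc chain (legitimate, since consecutive boxes have nonempty open rational intersection and each box is convex) and using the diameter bound $2^{-(t+2)}$ on the $V$-sets to certify $d_{\max}(F_t,F_{t+1})<2^{-(t+1)}$. This is arguably more concrete, at the modest price of verifying injectivity for a limit of polygonal curves rather than reading it off a pointwise definition, and of keeping the approximate initial and final vertices under control: it is cleanest to also require the rational points $x_t,y_t$ to lie in the first and last rational boxes of $\mathfrak{p}_t$ (a decidable test, and such points exist since those open boxes contain $x$ and $y$), although your $2^{-(t+3)}$ slack absorbs the error in any case. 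Finally, like the paper, you rely on the proof (not merely the statement) of Theorem \ref{thm:REFINES} to guarantee that a refinement satisfying the semidecidable search criteria exists at every stage; that reliance is no worse than the paper's own.
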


\begin{proof}
Set $\mathfrak{p}_0 = (\omega)$.

It now follows from Theorem \ref{thm:REFINES} and Proposition \ref{prop:CE} that from the given data, we can uniformly compute a sequence of additional arc chains $\frak{p}_1, \frak{p}_2, \ldots$ with the following properties.  
\begin{enumerate}
	\item $\frak{p}_{j+1}$ narrowly goes straight through $\frak{p}_j$.
	
	\item The diameter of $\frak{p}_{j+1}$ is less than  $2^{-j}$.
	
	\item There exists a rational box $R$ such that $x \in R$ and 
	$\overline{R} \subseteq V_{\mathfrak{p}_j,1}$.\label{pr:x.in}
	
	\item There exists a rational box $S$ such that $y \in S$ and 
	$\overline{S} \subseteq V_{\mathfrak{p}_j,l_{\mathfrak{p}_j}}$.\label{pr:y.in}	
\end{enumerate}
Let $V_{j,i} = V_{\mathfrak{p}_j, i}$ and 
$l_j = l_{\mathfrak{p}_j}$.
Since $\mathfrak{p}_{j+1}$ narrowly goes straight through $\mathfrak{p}_j$, there are integers 
\[
0 = s(j+1, 0) < s(j+1, 1) < \ldots < s(j + 1, l_{j} - 1) < s(j+1, l_{j}) = l_{j+1}
\]
such that $\overline{V_{j+1, i_1}} \subseteq V_{j, i_2}$ whenever 
$s(j+1, i_2 - 1) + 1\leq i_1 \leq s(j+1, i_2)$.  
Such integers can be computed uniformly from $i$ and the given data.  It follows from the proof of Theorem \ref{thm:REFINES} that we can choose these refinements so that $s(j+1, i-1) + 1 < s(j+1, i)$.  (Namely, since each $s_j$ is at least $2$.)

We now inductively define a family of intervals $\{I_{j,i}\}_{j \in \N, 1 \leq i \leq l_j}$.  
To begin, set $I_{0,1} = [0,1]$.  Fix $j \in \N$, and suppose that $I_{j,i}$ has been defined for all $i \in \{1, \ldots, l_j\}$.  Fix $i_2$ such that $1 \leq i_2 \leq l_j$.  Let $a, b$ denote the left and right endpoints of $I_{j, i_2}$ respectively.  Set $d = s(j+1, i_2) - s(j+1, i_2 - 1)$.  Whenever 
$s(j+1, i_2 - 1) + 1 \leq i_1 \leq s(j+1, i_2)$, we let 
\[
I_{j+1, i_1} = \left[a + \frac{i_1 - s(j+1, i_2 - 1) -1}{d} (b-a), a + \frac{i_1 - s(j+1, i_2 - 1)}{d}(b-a)\right].  
\]
For all $t \in [0,1]$ and $j \in \N$, let 
\begin{eqnarray*}
F_{t,j} & = & \{k\ :\ t \in I_{j,k}\}\\
S_{t,j} & = & \bigcup\{V_{j,k}\ :\ k \in F_{t,j}\}
\end{eqnarray*}
And let
\[
S_t = \bigcap_j S_{t,j}.
\]
For each $j$, $F_{t,j}$ consists either of a single number or a pair of consecutive integers.  Since $(V_{j+1,1}, \ldots, V_{j+1, l_j})$ is a simple chain, it follows that the diameter of 
$S_{t,j+1}$ is smaller than $2^{-j + 1}$.  Since $\mathfrak{p}_{j+1}$ narrowly goes straight through $\mathfrak{p}_j$, 
it follows that $\overline{S_{t,j+1}} \subseteq S_{t,j}$.  Thus, $S_t$ contains exactly one
point, and we define $h(t)$ to be this point.  

By definition, if $t \in I_{j,k}$, then $h(t) \in V_{j,k}$.  Since each $\mathfrak{p}_j$ is a simple chain, it follows from the construction of the $I_{j,k}$'s that $h$ is injective.
Since $x \in V_{j,1}$ for all $j$, $h(0) = x$.  Similarly, $h(1) = y$.

We now claim that $h$ can be computed from the given data.  For, whenever $I$ is a rational interval, let 
\[
S_{I,j} = \bigcup\{V_{j,k}\ :\ I \cap I_{j,k} \neq \emptyset\}.
\]
Given a name of a $t \in [0,1]$ as input, we list a rational rectangle $R$ as output whenever $I, j$ are found such that $t \in I$ and $R \supseteq S_{I,j}$.  Thus, $h(t)$ belongs to every rational rectangle listed.  Conversely, suppose $R$ is a rational rectangle that contains $h(t)$.  Then, there is a number $j$ such that $B_{2^{-j+1}}(h(t)) \subseteq R$.  Let $t \in I_{j,k_0}$, and let $I = I_{j, k_0}$.  Thus, $h(t) \in S_{I,j+1}$.  Also, the diameter of $S_{I,j+1}$ is smaller than
$2^{-j+1}$.  We conclude that $S_{I,j+1} \subseteq R$.  Hence, $R$ will eventually be listed.  Thus, a name of $h(t)$ is produced
by this process.
\QED\end{proof}

We are now in position to complete the proof of our main result.  

\section{From EULAC to SEULAC}
 
\begin{theorem}\label{thm:MAIN}
From a name of $X$ and a ULAC function for $X$, we can compute a SULAC witness
for $X$.
\end{theorem}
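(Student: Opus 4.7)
The plan is to combine Theorem \ref{thm:EULACCONVERSE}, Lemma \ref{lm:GET.STARTED}, and Theorem \ref{thm:PARAM1}. Let $g$ be the given ULAC function. By Theorem \ref{thm:EULACCONVERSE}, $g$ is automatically an LC function for $X$, so the machinery of witnessing chains from the previous section (with $g$ playing the role of ``$f$'' in the definition of a witnessing chain) is available. I would take the ULAC component of the SULAC witness to be $f(k) = g(k+2)$, which is still a ULAC function for $X$ since $g$ is.

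The procedure $\Phi$ would be the following. Given $k$ and names $p,q$ of distinct points $x,y \in X$ with $d(x,y) < 2^{-f(k)}$, I would use Proposition \ref{prop:CE} to enumerate all witnessing chains $\omega = (m, R_1, \ldots, R_{k_\omega})$ and search (by dovetailing with $p$ and $q$) for one satisfying (a) $x \in R_1$ and $y \in R_{k_\omega}$, i.e.\ $\omega$ is a witnessing chain from $x$ to $y$; and (b) $\diam(V_\omega) < 2^{-k}$. Condition (a) is confirmable from the names $p,q$ since a name of a point lists every rational box containing it, and condition (b) is decidable from the rational data defining $V_\omega$ (a finite union of open $2^{-m}$-neighborhoods of rational boxes). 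Once such an $\omega$ is produced, I would feed it into Theorem \ref{thm:PARAM1} to compute a name of a parametrization of an arc $A \subseteq X \cap V_\omega$ from $x$ to $y$; since $A \subseteq V_\omega$ and $\diam(V_\omega) < 2^{-k}$, the arc $A$ automatically has diameter less than $2^{-k}$, as required.

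The main obstacle is showing that this search terminates — equivalently, that a qualifying $\omega$ really exists — and this is where the ``$+2$'' shift in the definition of $f$ earns its keep. Since $d(x,y) < 2^{-g(k+2)}$, the ULAC property of $g$ supplies an arc $A$ in $X$ from $x$ to $y$ with $\diam(A) < 2^{-(k+2)}$. Because $x \in A$, this forces $A \subseteq B_{2^{-(k+2)}}(x)$, so $x$ and $y$ lie in the same connected component of $B_{2^{-(k+2)}}(x) \cap X$. Lemma \ref{lm:GET.STARTED} then supplies a witnessing chain $\omega$ from $x$ to $y$ with $\overline{V_\omega} \subseteq B_{2^{-(k+2)}}(x)$, whence $\diam(V_\omega) \leq 2^{-(k+1)} < 2^{-k}$. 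Every step above is uniform in the given names of $X$ and $g$, so $(f,\Phi)$ is a computable SULAC witness.
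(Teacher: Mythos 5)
Your proposal is correct, and its skeleton matches the paper's: use Theorem \ref{thm:EULACCONVERSE} so that the witnessing-chain machinery applies with the given ULAC function serving as the LC function, search (via the enumeration of Proposition \ref{prop:CE}, dovetailed with the names of $x$ and $y$) for a witnessing chain from $x$ to $y$ confined to a small region, prove termination by combining the short arc supplied by the ULAC property with Lemma \ref{lm:GET.STARTED}, and finish with Theorem \ref{thm:PARAM1}. Where you genuinely diverge is in how ``small region'' is enforced. The paper first constructs, from the names of $x$ and $y$, an explicit rational box $R_0$ with $\diam(R_0) < 2^{-k}$ and quantitative margins around $x$ and $y$; this coordinate-wise construction is what forces the dimension-dependent shift $f(k) = g(k+N_0)$ with $N_0 = N_1 + 3$, and the search condition is then $\overline{V_\omega} \subseteq R_0$. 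You instead impose $\diam(V_\omega) < 2^{-k}$ directly on the chain --- a condition confirmable from the rational data defining $V_\omega$ (strictly it is semi-decidable rather than decidable, since $\diam(V_\omega)$ is a computable real being compared with $2^{-k}$, but semi-decidability is all the dovetailed search needs, and your existence argument yields a chain with $\diam(V_\omega) \leq 2^{-(k+1)}$, so there is slack) --- and you use the ball $B_{2^{-(k+2)}}(x)$ only non-effectively, inside the termination argument, which is exactly where it belongs. This buys a dimension-independent shift $f(k) = g(k+2)$ and eliminates the most laborious part of the paper's proof. Two minor points, neither affecting correctness: you should first replace $g$ by an increasing ULAC function (computable from $g$, as noted after Definition \ref{def:EULAC}), since the witnessing-chain section carries the standing assumption that its LC function is increasing; and your verification of $x \in R_{\omega,1}$, $y \in R_{\omega,k_\omega}$ from the $\rho^n$-names is the same device the paper uses (it searches for rational boxes $S_2, S_3$ containing $x, y$ inside those links), so that step is sound.
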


\begin{proof}
Compute $N_1 \in \N$ such that $n \times (2^{-N_1})^2 < 1$.  (Recall that $n$ is the dimension of the space we are working in.)
It follows that for all $k \in \N$, 
\[
n \times (2^{-(k + N_1)})^2 < 2^{-2k}.
\]
Set $N_0 = N_1 + 3$.  

Let $g$ be a ULAC function for $X$.  We assume $g$ is increasing.
Let $f(k) = g(k + N_0)$.
It follows that $f$ is an increasing ULAC function for $X$.  
It then follows from Theorem 
\ref{thm:EULACCONVERSE} that $f$ is an LC function for $X$.

Suppose we are given as input $k \in \N$ and names of distinct 
$x,y \in X$ such that $d(x,y) < 2^{-f(k)}$.
We first claim there exist rational boxes $R_0, S_0, S_1$ such that 
\begin{itemize}
	\item $x \in S_0$
	\item $y \in S_1$
	\item $\overline{S_0}, \overline{S_1} \subseteq R_0$
	\item $\diam(R_0) < 2^{-k}$
	\item $d(\overline{S_0}, \R^n - R_0) \geq 2^{-(k + N_0)}$
	\item $d(\overline{S_1}, \R^n - R) \geq 2^{-(k + N_0)}$
\end{itemize}
For, let $x = (x_1, \ldots, x_n)$, and let $y = (y_1, \ldots, y_n)$.
Hence, $|x_j - y_j| < 2^{-(k + N_0)} < 2^{-(k + N_1)}$.
Choose rational numbers $r_j, s_j$ so that $s_j < x_j, y_j < r_j$, 
\[
2^{-(k + N_0)} < \min\{x_j, y_j\} - s_j < 2^{-(k + N_0 -1)},
\]
and
\[
2^{-(k + N_0)} < r_j - \max\{x_j, y_j\} < 2^{-(k + N_0 -1)}.
\]
We then have that
\begin{eqnarray*}
r_j - s_j & = & (r_j - \max\{x_j, y_j\}) + (\max\{x_j, y_j\} - \min\{x_j, y_j\})\\
& &  + (\min\{x_j, y_j\} - s_j)\\ 
& < & 2^{-(k + N_0 - 1)} + 2^{-(k + N_0)} + 2^{-(k + N_0 - 1)}\\
& < & 3 \cdot 2^{-(k + N_0 -1)}\\
& < & 4 \cdot 2^{-(k + N_0 -1)}\\
& = & 2^{-(k + N_0 -3)}\\
& = & 2^{-(k + N_1)}.
\end{eqnarray*}
It follows that 
\[
\sum_{j=1}^n (r_j - s_j)^2 < 2^{-2k}.
\]
Set $R_0 = (s_1, r_1) \times \ldots \times (s_n, r_n)$.
It then follows that $\diam(R_0) < 2^{-k}$.  It also follows that 
$d(x, \R^n - R_0)$, $d(y, \R^n - R_0) > 2^{-(k + N_0)}$.  So, there exist
$S_0, S_1$ as required.

So, we begin by searching for $R_0$, $S_0$, $S_1$ with these properties.
Since $d(x,y) < 2^{-f(k)}$, there is an arc in $X$ from $x$ to $y$ of diameter
less than $2^{-(k + N_0)}$.  It follows that any such arc is contained in $R_0$.
Furthermore, the diameter of any arc contained in $R_0$ is smaller than
$2^{-k}$.

Now, $x,y$ are in the same connected component of $X \cap R_0$.  So, by Lemma \ref{lm:SAME.COMPONENT}, there is a witnessing chain $\omega$ from $x$ to $y$ such that $\overline{V_\omega} \subseteq R_0$.  Additionally, by Proposition \ref{prop:CE}, we can find such a chain through a search procedure.  Namely, we scan a computed list of witnessing chains while simultaneously scanning our names for $x,y$ until we find a witnessing chain $\omega$ and rational boxes $S_2,S_3$ such that $x \in S_2 \subseteq R_{\omega,1}$, $y \in S_3 \subseteq R_{\omega, k_\omega}$, and $\overline{V_\omega} \subseteq R_0$.  
We now apply Theorem \ref{thm:PARAM1}.
\QED\end{proof}

\begin{corollary}
Every computably compact EULAC continuum is SEULAC.
\end{corollary}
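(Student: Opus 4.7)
The plan is to upgrade from mere existence of a small arc (EULAC) to computation of a parametrization (SEULAC) by funnelling everything through Theorem \ref{thm:PARAM1}, which already produces parametrizations from witnessing chains. Given a ULAC function $g$, Theorem \ref{thm:EULACCONVERSE} says $g$ is automatically an LC function, so the entire witnessing-chain machinery of Section 4 and, in particular, Theorem \ref{thm:PARAM1} are available to us. My target $f$ will be a shift $f(k) = g(k + N_0)$ of $g$, where $N_0$ is a constant depending only on the ambient dimension $n$. The role of $N_0$ is to buy enough slack for the geometric construction below.

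Given $k$ and names of distinct $x, y \in X$ with $d(x,y) < 2^{-f(k)}$, I would first search (from the names of $x,y$) for rational boxes $R_0, S_0, S_1$ such that $x \in S_0$, $y \in S_1$, $\overline{S_0} \cup \overline{S_1} \subseteq R_0$, $\diam(R_0) < 2^{-k}$, and each of $\overline{S_0}, \overline{S_1}$ is at distance at least $2^{-(k + N_0)}$ from $\R^n - R_0$. This is a routine component-wise construction once $N_0$ is chosen large enough to absorb the dimensional factor between the max-norm component bounds and the Euclidean diameter, plus the rounding to rational coordinates. Since $d(x,y) < 2^{-g(k + N_0)}$, the ULAC property of $g$ supplies an arc in $X$ from $x$ to $y$ of diameter less than $2^{-(k+N_0)}$; the margin condition forces this arc to lie inside $R_0$, so $x$ and $y$ share a connected component of $R_0 \cap X$.

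Now I would invoke Lemma \ref{lm:SAME.COMPONENT}, which guarantees a witnessing chain $\omega$ from $x$ to $y$ with $\overline{V_\omega} \subseteq R_0$. By Proposition \ref{prop:CE}, we can enumerate witnessing chains; scanning this enumeration together with the names of $x$ and $y$, we search for a witnessing chain $\omega$ and rational boxes $S_2, S_3$ witnessing $x \in S_2 \subseteq R_{\omega,1}$, $y \in S_3 \subseteq R_{\omega,k_\omega}$, and $\overline{V_\omega} \subseteq R_0$. Such a tuple eventually appears. Finally, feed this data into Theorem \ref{thm:PARAM1} to obtain a name of a parametrization of an arc in $X \cap V_\omega \subseteq R_0$ from $x$ to $y$; since the arc lies in $R_0$, its diameter is at most $\diam(R_0) < 2^{-k}$.

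The only real difficulty is bookkeeping: $N_0$ must be chosen so that the margin $2^{-(k+N_0)}$ around $x$ and $y$ both (i) dominates $d(x,y)$ so that some small arc from $x$ to $y$ is trapped inside $R_0$, and (ii) is consistent with $\diam(R_0) < 2^{-k}$ after accounting for the $\sqrt{n}$ factor in going from coordinate-wise bounds to the Euclidean diameter. Everything else is assembly of already-proven computability results: enumerability of witnessing chains (Proposition \ref{prop:CE}), producibility of an appropriate one (Lemma \ref{lm:SAME.COMPONENT}), and the parametrization step (Theorem \ref{thm:PARAM1}).
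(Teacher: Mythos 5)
Your proposal is correct and follows essentially the same route as the paper: the corollary is an immediate consequence of Theorem \ref{thm:MAIN}, whose proof likewise shifts the ULAC function by a dimension-dependent constant $N_0$, traps a small arc inside a rational box $R_0$ via the margin condition, invokes Theorem \ref{thm:EULACCONVERSE} to get an LC function, and then searches (Lemma \ref{lm:SAME.COMPONENT}, Proposition \ref{prop:CE}) for a witnessing chain with $\overline{V_\omega} \subseteq R_0$ before applying Theorem \ref{thm:PARAM1}. No substantive differences.
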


The above proof not only constructs an arc from $x$ to $y$, but in addition it constructs
a parametrization of that arc.  This leads to the following definition and corollaries.

\begin{definition}\label{def:AC}
\begin{enumerate}
	\item An \emph{arcwise connectivity (AC) operator} for $X$ is a function
$\Phi : \subseteq \Sigma^\omega \times \Sigma^\omega \rightarrow \Sigma^\omega$
such that whenever $p,q$ are names of distinct $x,y \in X$ respectively, 
$\Phi(p,q)$ is a name of a parametrization of an arc $A \subseteq X$ from $x$ to $y$.

	\item $X$ is \emph{computably arcwise connected} if it has a computable AC operator.
\end{enumerate}
\end{definition}

\begin{corollary}\label{cor:AC}
It is possible to uniformly compute, from names of $X$, an LC function for $X$, and an open $U \subseteq \R^n$ such that $U \cap X$ is connected, a name of an AC operator for $X \cap U$.
\end{corollary}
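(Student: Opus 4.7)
The plan is to mimic the search strategy in the proof of Theorem \ref{thm:MAIN}, except that the given open set $U$ now plays the role of the rational box $R_0$ constructed there; since no diameter requirement is imposed on the desired arc, we no longer have to manufacture such a container ourselves. By the Principle of Type Conversion, it suffices to show that, uniformly in names of $X$, of the LC function $f$, of $U$, and of distinct points $x,y \in X \cap U$, one can compute a name of a parametrization of an arc from $x$ to $y$ lying in $X \cap U$.

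Because $X \cap U$ is connected by hypothesis, $x$ and $y$ lie in the same connected component of $X \cap U$, so Lemma \ref{lm:SAME.COMPONENT} furnishes a witnessing chain $\omega$ from $x$ to $y$ with $\overline{V_\omega} \subseteq U$. To locate such an $\omega$ computably, I would scan the enumeration of witnessing chains supplied by Proposition \ref{prop:CE}, and for each candidate $\omega$ in the list simultaneously scan the names of $x$, $y$, and $U$ until I discover rational boxes $S_0, S_1$ with $x \in S_0 \subseteq R_{\omega,1}$ and $y \in S_1 \subseteq R_{\omega,k_\omega}$, together with a finite subfamily of the rational boxes appearing in the name of $U$ whose union contains $\overline{V_\omega}$. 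The witnessing chain provided by Lemma \ref{lm:SAME.COMPONENT} meets all of these requirements, so the procedure is guaranteed to terminate on some witnessing chain $\omega$ with $V_\omega \subseteq U$. I would then invoke Theorem \ref{thm:PARAM1} on this $\omega$ to produce a name of a parametrization of an arc $A \subseteq X \cap V_\omega \subseteq X \cap U$ from $x$ to $y$, which is exactly the output required of the AC operator.

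The main obstacle is making the containment test $\overline{V_\omega} \subseteq U$ semi-decidable. Two facts rescue the situation: the set $V_\omega$ is a finite union of $2^{-m_\omega}$-neighbourhoods of rational boxes, hence bounded, so $\overline{V_\omega}$ is compact; and an open $U \subseteq \R^n$ is named by an enumeration of rational boxes whose union is $U$, whereupon $\overline{V_\omega} \subseteq U$ holds if and only if finitely many of these enumerated boxes already cover $\overline{V_\omega}$, a condition verifiable in finite time. Once this check is in place, the rest of the construction is a straightforward assembly of the tools already developed in Section 5.
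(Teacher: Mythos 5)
Your proposal is correct and follows essentially the same route as the paper's own proof: search the enumeration of witnessing chains from Proposition \ref{prop:CE} for an $\omega$ from $x$ to $y$ with $\overline{V_\omega} \subseteq U$, using Lemma \ref{lm:SAME.COMPONENT} to guarantee termination, and then apply Theorem \ref{thm:PARAM1}. You additionally spell out the semi-decidability of the test $\overline{V_\omega} \subseteq U$ via compactness, a detail the paper leaves implicit.
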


\begin{proof}
Given names of distinct $x,y \in U \cap X$, we first search for a witnessing chain 
$\omega$ and rational boxes $S_1, S_2$ such that $x \in S_1$, $y \in S_2$, $\overline{V_\omega} \subseteq U$, $\overline{S_1} \subseteq R_{\omega,1}$, and $\overline{S_2} \subseteq R_{\omega, k_\omega}$.   By Lemma \ref{lm:GET.STARTED}, this search must terminate.
We then apply Theorem \ref{thm:PARAM1}.
\QED\end{proof}

\begin{corollary}\label{cor:EAC}
Every computably compact continuum that is ELC is also computably arcwise connected.
\end{corollary}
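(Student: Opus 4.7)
The plan is to apply Corollary \ref{cor:AC} directly, taking $U = \R^n$. Since $X$ is a continuum, it is connected, so $U \cap X = X$ is connected, and the hypothesis of Corollary \ref{cor:AC} on $U \cap X$ is satisfied. The remaining inputs to Corollary \ref{cor:AC} are a name of $X$, a computable LC function for $X$, and a name of $U$.

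First, a name of $X$ is available by hypothesis, since $X$ is computably compact. Second, a computable LC function for $X$ is available by hypothesis, since $X$ is ELC. Third, a name of $U = \R^n$ as an open subset of $\R^n$ is trivially computable: in the naming system implicit in Corollary \ref{cor:AC}, an open set is presented via an enumeration of rational boxes contained in it, and for $U = \R^n$ one simply enumerates all rational boxes.

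Feeding these three names into the machine provided by Corollary \ref{cor:AC} produces a name of an AC operator for $X \cap U = X$. By Definition \ref{def:AC}, this means $X$ is computably arcwise connected, which is what we wanted. The only potential subtlety is bookkeeping: ensuring that the notion of ``name of an open set'' used implicitly in Corollary \ref{cor:AC} does admit $\R^n$ as a trivially computable instance. This is routine in the Type Two Effectivity framework (cf.\ \cite{Weihrauch.2000}), so there is no real obstacle.
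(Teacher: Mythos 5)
Your proposal is correct and is exactly how the paper intends this corollary to be read: it is stated without proof as an immediate consequence of Corollary \ref{cor:AC}, obtained by taking $U = \R^n$ (trivially named as an open set), using connectedness of the continuum $X$, computable compactness for a name of $X$, and ELC for a computable LC function. No gaps.
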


\section{Computing parametrizations of arcs}

In \cite{Miller.2002.4springer}, J. Miller constructs an arc that is computable as a compact subset of 
$\R^2$ and has computable endpoints but has no computable parametrization.  Recently, 
Gu, Lutz, and Mayordomo, have strengthened this result by constructing an arc $A$
such that any computable function $f$ of $[0,1]$ onto $A$ retraces itself infinitely often \cite{GLM.2009.4springer}.  Thus, a name of an arc as a compact set, while it provides enough information to plot $A$ on a computer screen (see, \emph{e.g.}, Section 5.2 of \cite{Weihrauch.2000}), falls far short of what is sufficient for the computation of a parameterization of $A$.  We now show a local connectivity function provides the right amount of additional information.  Recall that a \emph{modulus of continuity} of a continuous $f : [0,1] \rightarrow \R^n$ is a function $g : \N \rightarrow \N$ such that $d(f(s), f(t)) < 2^{-k}$ whenever $k \in \N$ and $s,t \in [0,1]$ are such that $|s - t| \leq 2^{-g(k)}$. 

\begin{theorem}
From a name of a parametrization of an arc $A \subseteq \R^n$, we can compute 
a name of $A$ and an LC function for $A$.  Conversely, from a name of an arc
$A \subseteq \R^n$ and an LC function for $A$, we can compute a name of a 
parametrization of $A$.
\end{theorem}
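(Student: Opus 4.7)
For the forward direction, given a $\delta_{C}$-name of a parametrization $f : [0,1] \to A$, I would first compute $A = f([0,1])$ as a $\kappa_{mc}$-name via the standard fact that a computable continuous function carries a computably compact set to a computably compact set. To produce an LC function, I would combine a modulus of continuity $\mu$ of $f$ with a ``reverse modulus'' for $f^{-1}$: for each $k$, the quantity
\[
\delta_k := \min\{d(f(s),f(t)) : s,t \in [0,1],\ |s-t| \geq 2^{-k}\}
\]
is strictly positive by injectivity of $f$ and uniformly computable by the min-on-compact principle, so one can effectively locate $\nu(k) \in \N$ with $2^{-\nu(k)} < \delta_k$. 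Setting $F(k) := \nu(\mu(k+1))$, whenever $p = f(s)$ and $q = f(t)$ satisfy $d(p,q) < 2^{-F(k)}$ we get $d(p,q) < \delta_{\mu(k+1)}$ and hence $|s-t| < 2^{-\mu(k+1)}$, so the sub-arc $f([\min(s,t),\max(s,t)])$ has diameter strictly less than $2^{-k}$; being a connected subset of $A \cap B_{2^{-k}}(p)$ containing $q$, it witnesses $q \in C_p^A(B_{2^{-k}}(p))$.

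For the converse, I would first apply Corollary \ref{cor:AC} with $U = \R^n$, noting $U \cap A = A$ is connected because $A$ is a continuum, to obtain a computable AC operator $\Phi$ for $A$. The task then reduces to effectively locating the two endpoints $e_1, e_2$ of $A$: since an arc contains a unique sub-arc between any two of its points, the arc in $A$ from $e_1$ to $e_2$ is $A$ itself, and hence $\Phi(e_1,e_2)$ is the desired $\delta_{C}$-name.

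My plan for locating the endpoints is to use fine simple chains of rational boxes covering $A$. For each $n$, using Proposition \ref{prop:CE} together with decidable side conditions on the rational boxes, I would search for a simple chain $(R_1,\ldots,R_k)$ of rational boxes of diameter less than $2^{-n}$ with $A \subseteq \bigcup_i R_i$. Such a chain exists at every scale because $A$ is an arc, and any sufficiently fine cover can be arranged so that its chain order follows the linear order along $A$; hence $R_1$ contains a sub-arc of $A$ with $e_1$ as an endpoint and $R_k$ contains one with $e_2$. Extract points $x_n \in R_1 \cap A$ and $y_n \in R_k \cap A$; after fixing a consistent orientation across different $n$ (detectable from $\Phi$ by comparing with earlier approximations), the sequences $(x_n)$ and $(y_n)$ converge effectively to $e_1$ and $e_2$.

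The main technical obstacle will be justifying that the extremes of a fine simple chain covering $A$ really localize its endpoints, with a computable rate of convergence. The key topological input is the rigidity of arcs: in any simple chain of sets of sufficiently small diameter covering an arc, the chain order must coincide up to reversal with the linear order along the arc, so $R_1 \cap A$ and $R_k \cap A$ are forced to lie in small neighborhoods of the respective endpoints. Once this is established and the endpoints are obtained as computable points, a single application of $\Phi$ completes the construction.
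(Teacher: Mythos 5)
Your forward direction is correct and is essentially the paper's argument: the paper composes a modulus of continuity of $h$ with a modulus of continuity of $h^{-1}$, and your quantity $\delta_k$ (obtained by minimizing $d(f(s),f(t))$ over the compact set $\{|s-t|\ge 2^{-k}\}$) is just an effective way of producing that inverse modulus, so $F=\nu\circ\mu(\cdot+1)$ works. The overall skeleton of your converse (compute the two endpoints, then invoke Corollary \ref{cor:AC} and use the fact that the only arc in $A$ joining its endpoints is $A$ itself) also matches the paper.

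The gap is in the endpoint-location step, and it is exactly the step where the LC function must enter quantitatively --- yet your search never uses it. The claimed ``rigidity of arcs'' is false as stated: for a fixed mesh, a fine simple chain of rational boxes covering $A$ need not have its extreme links near the endpoints. Take $A$ to be a thin hairpin: out along $y=0$ from $x=0$ to $x=1$ and back along $y=\eta$ for tiny $\eta>0$, so both endpoints lie near $(0,0)$. For any mesh larger than $\eta$, the boxes marching left-to-right (each tall enough to meet both strands) form a legitimate simple chain covering $A$, every box meets $A$, but the last link sits at the bend $x\approx 1$, far from both endpoints, while the first link contains \emph{both} endpoints; your extracted $y_n$ would converge to an interior point of $A$. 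The threshold below which the chain order does follow the arc order is governed by a modulus of continuity of $h^{-1}$, i.e.\ precisely the LC/ULAC data, and it cannot be recovered from the $\kappa_{mc}$-name alone; so ``sufficiently fine'' is not something your algorithm can recognize, and the vague ``orientation fixing via $\Phi$'' does not supply a test that rejects hairpin-ordered chains or a computable rate of convergence. The paper closes this hole by first computing a ULAC function $g$ from the LC function (Theorem \ref{thm:EULAC}), restricting to arc chains covering $A$ of diameter less than $2^{-g(k)}$, and proving via the ULAC property that every endpoint must lie in the union $T_1\cup T_2$ of links within $2^{-k}$ of the two extreme links; only after verifying that $T_1$ and $T_2$ are separated by disjoint rational boxes does it read off the endpoints, with fineness always certified against $g$. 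Your proposal needs this quantitative use of the LC function (or an equivalent device); without it the endpoint computation fails.
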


\begin{proof}
Let $h$ be a parametrization of $A$.  From $h$, we can compute a name of $A$.  (See, \emph{e.g.}, Theorem 6.2.4.4 of \cite{Weihrauch.2000}.)
  It follows by say Theorem 6.2.7 of \cite{Weihrauch.2000} that we can compute, uniformly in the given data, a modulus of continuity for $h$, $m$. 
  Compute $h^{-1}$ and a 
modulus of continuity for $h^{-1}$, $m_1$.  Let $g = m_1 \circ m$.

We claim that $g$ is a local connectivity function for $A$.  
For, let $k \in \N$, and let $p_1 \in A$.  Let $x_1$ be the unique preimage of 
$p_1$ under $h$.  Let $I$ be the interval $(x_1 - 2^{-m(k)}, x_1 + 2^{-m(k)})$.
Let $C = f[I \cap [0,1]]$.  
We claim that 
\[
A \cap B_{2^{-g(k)}}(p_1) \subseteq C \subseteq B_{2^{-k}}(p_1).
\]
For, let $p_2 \in C$.  Let $x_2$ be the unique preimage of $p_2$ under $h$.
Then, $x_2 \in I$, and so $|x_1 - x_2| < 2^{-m(k)}$.  Hence, $d(p_2, p_1) < 2^{-k}$.
Thus, $C \subseteq B_{2^{-k}}(p_1)$.  

Now, suppose $p_2 \in A \cap B_{2^{-g(k)}}(p_1)$.  Again, let $x_2$ be the unique preimage of $p_2$ under $h$.  Then, by defnition of $g$, $|x_1 - x_2| < 2^{-m(k)}$.  
Hence, $x_2 \in I$.  Thus, $p_2 \in C$. 

Conversely, suppose we are given a name of $A$ and an LC function for $A$, $f$. 
Let $x,y$ be the endpoints of $A$.
Our first goal is to compute $x$ and $y$ from these data.

In what follows, we refer to the theorems and corollaries proved above, taking $A$ for the subset $X$ there.  We first prove a few preliminary claims.  Let $h$ be a homeomorphism of $[0,1]$ onto $A$.

By Theorem \ref{thm:EULAC}, we can uniformly compute from the given data a ULAC function for $A$, $g$. 

Suppose we have an arc chain for $A$, $\mathfrak{p}$, such that 
$A \subseteq \bigcup_j V_{\mathfrak{p}, j}$.  Let: 
\begin{eqnarray*}
\mathcal{I}_1(\mathfrak{p}, k) & = & \{i\ |\ d(\overline{V_{\mathfrak{p}, 1 } }, \overline{V_{\mathfrak{p}, i } }) < 2^{-k}\}\\
\mathcal{I}_2(\mathfrak{p}, k) & = & \{i\ |\ d(\overline{V_{\mathfrak{p}, l_\mathfrak{p}}}, 
\overline{V_{\mathfrak{p}, i}}) < 2^{-k}\}\\
T_j(\mathfrak{p}, k) & = & \bigcup_{i \in \mathcal{I}_j(\mathfrak{p}, k)} V_{\mathfrak{p}, i}
\end{eqnarray*}
Now, suppose the diameter of $\mathfrak{p}$ is less than $2^{-g(k)}$.  Then, 
\begin{eqnarray*}
\diam(T_j(\mathfrak{p}, k)) & < & 2^{-k+ 3}.\\
\end{eqnarray*}
We now claim that each endpoint of $A$ lies in at least one of 
$T_1(\mathfrak{p}, k)$, $T_2(\mathfrak{p}, k)$.  For, suppose by way of contradiction that $p_1$ is an endpoint of $A$ that does not
lie in either of these sets.  Choose $i$ such that $p_1 \in V_{\mathfrak{p}, i} \cap A$.
Let $p_2 \in V_{\mathfrak{p}, 1} \cap A$, and let 
$p_3 \in V_{\mathfrak{p}, l_\mathfrak{p}} \cap A$.  Let $A_1$ be the subarc of $A$ from $p_1$ to $p_2$.  Let $A_2$ be the subarc of $A$ from $p_1$ to $p_3$.  
Therefore, since $A$ is an arc, $A_1 \subseteq A_2$ or $A_2 \subseteq A_1$.  Without loss of generality, suppose $A_1 \subseteq A_2$.  
Let $A_{2,1}$ be the subarc of $A$ from $p_2$ to $p_3$ so that 
$A_2 = A_1 \cup A_{2,1}$ and $A_1 \cap A_{2,1} = \{p_2\}$.  We claim that 
$A_{2,1} \cap V_{\mathfrak{p}, i} \neq \emptyset$.   For, suppose otherwise.
Let 
\begin{eqnarray*}
U & = & A_{2,1} \cap \bigcup_{1 \leq j < i} V_{\mathfrak{p}, j}\\
V & = & A_{2,1} \cap \bigcup_{i < j \leq l_\mathfrak{p}} V_{\mathfrak{p}, j}
\end{eqnarray*}
Since $(V_{\mathfrak{p}, 1}, \ldots, V_{\mathfrak{p}, l_\mathfrak{p}})$ is a
simple chain, it follows that $U \cap V = \emptyset$.  On the other hand, since $A_{2,1} \subseteq A$, $U$ and $V$ are non-empty.  But, since $A_{2,1}$ is connected, this is a contradiction.
So, let $p_4 \in A_{2,1} \cap V_{\mathfrak{p}, i}$.  Therefore, 
$d(p_4, p_1) < 2^{-g(k)}$.  Let $B$ be the subarc of $A$ from $p_4$ to $p_2$.
The only arc on $A$ from $p_4$ to $p_1$ is $B \cup A_1$.  Since 
$d(\overline{V_{\mathfrak{p}, i}}, \overline{V_{\mathfrak{p}, 1}}) \geq 2^{-k}$, 
$d(p_4, p_2) \geq 2^{-k}$.  Therefore, $\diam(B \cup A_1) \geq 2^{-k}$ which 
contradicts the assumption that $g$ is a ULAC function for $A$.  So, $p_1 \in T_1(\mathfrak{p}, k) \cup T_2(\mathfrak{p}, k)$.

Now, suppose $k$ is such that $\sqrt{n}(2^{-k+2}) < d(x,y) / 2$.
Therefore, there exist rational boxes $R_1, R_2$ such that 
$T_j(\mathfrak{p}, k) \subseteq R_j$ and $\diam(R_j) \leq \sqrt{n}(2^{-k+2})$.  Hence, 
$R_1 \cap R_2 = \emptyset$.  For, if $p \in R_1 \cap R_2$, then 
$d(x,y) \leq d(x,p) + d(p,y) < d(x,y)$.

So, to compute the endpoints of $A$ we first search for an arc chain for $A$, $\mathfrak{p}$, and a $k \in \N$ as well as disjoint rational boxes $R_1, R_2$ such that $A \subseteq \bigcup_j V_{\mathfrak{p}, j}$, the diameter of $\mathfrak{p}$ is less than $2^{-g(k)}$, and 
$T_j(\mathfrak{p}, k) \subseteq R_j$.  

We can then compute a name for one endpoint of $A$ as follows.
Enumerate a rational box $R$ whenever we find a $k' \in \N$ and an arc chain for $A$, $\mathfrak{p}$, such that $A \subseteq \bigcup_j V_{\mathfrak{p}, j}$, the diameter of 
$\mathfrak{p}$ is less than $2^{-g(k')}$, and 
\[
T_j(\mathfrak{p}, k) \subseteq R \subseteq R_1.
\]
We can similarly compute a name of the other endpoint of $A$.  

We now apply Corollary \ref{cor:AC}.
\QED\end{proof}

The following is claimed but not proven in \cite{Miller.2002.4springer}.

\begin{corollary}
An arc $A \subseteq \R^n$ has a computable parametrization if and only if 
it is computable as a compact set and is effectively locally connected.
\end{corollary}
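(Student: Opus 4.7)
The plan is to observe that this corollary is essentially an immediate specialization of the preceding theorem, obtained by taking the inputs and outputs to be computable rather than merely uniformly computable from given names. Both directions of the biconditional are handled by the two halves of that theorem, applied with trivial (computable) oracles.

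For the forward direction, suppose $A$ has a computable parametrization $h$. Then a name of $h$ is computable, so I would feed this name to the first half of the preceding theorem, which produces a name of $A$ as a compact set and an LC function for $A$ uniformly from the name of $h$. Since the input name is computable and the procedure is uniform, the resulting name of $A$ and the resulting LC function are both computable. Hence $A$ is computable as a compact subset of $\R^n$ and is effectively locally connected in the sense of Definition \ref{def:ELC}.

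For the converse direction, suppose $A$ is computable as a compact set and is effectively locally connected. Then there is a computable name of $A$ and a computable LC function $f$ for $A$. I would feed these to the second half of the preceding theorem, which uniformly produces a name of a parametrization of $A$. Since both inputs are computable and the procedure is uniform, the resulting name of a parametrization is computable, so $A$ has a computable parametrization.

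There is no real obstacle here: the entire content of the corollary has already been absorbed into the uniform statement of the preceding theorem, and all that remains is to note that applying a uniformly computable procedure to computable inputs yields a computable output. The only thing to check is that the definitions match up, in particular that ``computable as a compact set'' is exactly the assertion that there is a computable $\kappa_{mc}$-name, and that ``effectively locally connected'' per Definition \ref{def:ELC} is exactly the assertion that $A$ has a computable LC function, both of which are immediate from Section \ref{sec:TTE}.
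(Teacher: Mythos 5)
Your proposal is correct and matches the paper's treatment: the paper gives no separate argument for this corollary, treating it precisely as the specialization of the preceding uniform theorem to computable names (computable $\kappa_{mc}$-name, computable LC function, computable $\delta_C$-name of a parametrization). Your only added remark—that uniform procedures applied to computable inputs give computable outputs, and that the definitions line up—is exactly the intended justification.
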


\section*{Acknowledgements}
The second author thanks his wife Susan for her support as well as Jack Lutz and the computer science department of Iowa State University.   We also thank the referees for their helpful comments.

\bibliographystyle{amsplain}      

\begin{thebibliography}{10}
\bibitem{Bishop.Bridges.1985}
Bishop, E., Bridges, D.: Constructive analysis, \emph{Grundlehren der
  Mathematischen Wissenschaften [Fundamental Principles of Mathematical
  Sciences]}, vol. 279.
\newblock Springer-Verlag, Berlin (1985)

\bibitem{Brattka.2008}
Brattka, V.: Plottable real number functions and the computable graph theorem.
\newblock SIAM J. Comput. \textbf{38}(1), 303--328 (2008)

\bibitem{Brattka.Weihrauch.1999}
Brattka, V., Weihrauch, K.: Computability on subsets of {E}uclidean space. {I}.
  {C}losed and compact subsets.
\newblock Theoret. Comput. Sci. \textbf{219}(1-2), 65--93 (1999).
\newblock Computability and complexity in analysis (Castle Dagstuhl, 1997)

\bibitem{Davis.1958}
Davis, M.: Computability and unsolvability.
\newblock McGraw-Hill Series in Information Processing and Computers.
  McGraw-Hill Book Co., Inc., New York (1958)

\bibitem{Ershov.Goncharov.Nerode.Remmel.1998.2}
Ershov, Y.L., Goncharov, S.S., Nerode, A., Remmel, J.B., Marek, V.W. (eds.):
  Handbook of recursive mathematics. {V}ol. 2, \emph{Studies in Logic and the
  Foundations of Mathematics}, vol. 139.
\newblock North-Holland, Amsterdam (1998).
\newblock Recursive algebra, analysis and combinatorics

\bibitem{Feferman.2006}
Feferman, S.: Turing's thesis.
\newblock Notices Amer. Math. Soc. \textbf{53}(10), 1200--1206 (2006)

\bibitem{GLM.2009.4springer}
Gu, X., Lutz, J., Mayordomo, E.: Curves that must be retraced.
\newblock In: A.~Bauer, R.~Dillhage, P.~Hertling, K.~Ko, R.~Rettinger (eds.)
  CCA 2009, Sixth International Conference on Computability and Complexity in
  Analysis, \emph{Informatik Berichte}, vol. 353, pp. 147 -- 158.
  Fern-Universit{\"a}t in Hagen (2009)

\bibitem{Hahn.1914}
Hahn, H.: Mengentheoretische characterisierung der stetigen kurven.
\newblock Sitzungsberichte Akad. Wiss. Wien Abt. IIa \textbf{123}, 2433--2489
  (1914)

\bibitem{Hocking.Young.1961}
Hocking, J.G., Young, G.S.: Topology, second edn.
\newblock Dover Publications Inc., New York (1988)

\bibitem{Kalantari.Welch.1998}
Kalantari, I., Welch, L.: Point-free topological spaces, functions and
  recursive points; filter foundation for recursive analysis. {I}.
\newblock Ann. Pure Appl. Logic \textbf{93}(1-3), 125--151 (1998)

\bibitem{Kalantari.Welch.1999}
Kalantari, I., Welch, L.: Recursive and nonextendible functions over the reals;
  filter foundation for recursive analysis. {II}.
\newblock Ann. Pure Appl. Logic \textbf{98}(1-3), 87--110 (1999)

\bibitem{Mazurkiewicz.1920}
Mazurkiewicz, S.: Sur les lignes de {J}ordan.
\newblock Fund. Math. \textbf{1}, 166--209 (1920)

\bibitem{Miller.2002.4springer}
Miller, J.: Effectiveness for embedded spheres and balls.
\newblock In: V.~Brattka, M.~Schr{\"o}der, K.~Weihrauch (eds.) CCA 2002,
  Computability and Complexity in Analysis, \emph{Electronic Notes in Computer
  Science}, vol.~66, pp. 127 -- 138. Elsevier (2002)

\bibitem{Munkres.1975}
Munkres, J.R.: Topology: a first course.
\newblock Prentice-Hall Inc., Englewood Cliffs, N.J. (1975)

\bibitem{Pour-El.Richards.1989}
Pour-El, M.B., Richards, J.I.: Computability in analysis and physics.
\newblock Perspectives in Mathematical Logic. Springer-Verlag, Berlin (1989)

\bibitem{Weihrauch.2000}
Weihrauch, K.: Computable analysis.
\newblock Texts in Theoretical Computer Science. An EATCS Series.
  Springer-Verlag, Berlin (2000)



\end{thebibliography}

\end{document}